\newcommand{\eps}{\varepsilon}
\newcommand{\Se}{\mathcal{S}^\eps}
\newcommand{\ueh}{u^{\eps,h}}
\newcommand{\eh}{e^{\eps,h}}
\newcommand{\De}{D^\eps}
\newcommand{\Deh}{D^\eps_h}
\newcommand{\Vehg}{V^\eps_{h,g}}
\newcommand{\Vehn}{V^\eps_{h,0}}
\newcommand{\Vh}{V_{h}}
\newcommand{\Wehg}{W^\eps_{h,g}}
\newcommand{\Wehn}{W^\eps_{h,0}}
\newcommand{\Wh}{W_{h}}
\newcommand{\Leh}{\mathcal{L}^{\eps}_h}
\newcommand{\T}{\mathcal{T}}
\newcommand{\RR}{\mathbb{R}}
\newcommand{\NN}{\mathbb{N}}
\def\d{\,\mathrm{d}}
\newtheorem{theorem}{Theorem}[section]
\newtheorem*{theorem*}{Theorem}
\newtheorem{lemma}[theorem]{Lemma}
\theoremstyle{remark}
\newtheorem{remark}[theorem]{Remark}
\begin{document}

\title[DDM for Dirichlet boundary value problems]{Error analysis of a diffuse interface method for elliptic problems with Dirichlet boundary conditions}
\author[M. Schlottbom]{Matthias Schlottbom$^{\dag}$}
\thanks{$^\dag$ Institute for Computational and Applied Mathematics,
University of Münster, Einsteinstr. 62, 48149 M\"unster, Germany.\\
Email: {\tt schlottbom@uni-muenster.de}
}
\date{\today}

\begin{abstract}
 We use a diffuse interface method for solving Poisson's equation with a Dirichlet condition on an embedded curved interface.  The resulting diffuse interface problem is identified as a standard Dirichlet problem on approximating regular domains.  We estimate the errors introduced by these domain perturbations, and prove convergence and convergence rates in the $H^1$-norm, the $L^2$-norm and the $L^\infty$-norm in terms of the width of the diffuse layer. For an efficient numerical solution we consider the finite element method for which another domain perturbation is introduced. These perturbed domains are polygonal and non-convex in general. We prove convergence and convergences rates in the $H^1$-norm and the $L^2$-norm in terms of the layer width and the mesh size. In particular, for the $L^2$-norm estimates we present a problem adapted duality technique, which crucially makes use of the error estimates derived for the regularly perturbed domains. Our results are illustrated by numerical experiments, which also show that the derived estimates are sharp.
\end{abstract}

\maketitle

{\footnotesize
{\noindent \bf Keywords:} 
diffuse domain method,
elliptic boundary value problems,
embedded Dirichlet conditions,
immersed interface,
domain approximations,
finite element method
}

{\footnotesize
\noindent {\bf AMS Subject Classification:}  
35J20 
65N30 
65N85 
}

\section{Introduction and main results}
This paper considers the approximate solution of the following model problem by a diffuse interface method: Find $u\in H^1_0(\Omega)$ such that
 \begin{align}\label{eq:model}
  -\Delta u = f \quad \text{in } \Omega\setminus \Gamma,\qquad 
  u_{\mid\Gamma} = g_{\mid\Gamma} \quad \text{on } \Gamma.
 \end{align}
Here, the function $f\in L^2(\Omega)$ models volume sources in a convex polygonal domain $\Omega\subset\RR^n$, $n=2,3$, and the function $g\in H^2(\Omega)$ defines the values of $u$ on an interface $\Gamma\in C^{1,1}$, where $\Gamma \subset\Omega$ is a closed manifold of co-dimension one, i.e. for $n=2$ a curve, or a surface if $n=3$.
We assume that the interface $\Gamma$ separates $\Omega$ into two domains $\Omega=D_1\cup \Gamma\cup D_2$, where $\Gamma=\partial D_1$, see Figure~\ref{fig:domain}.

The analysis of \eqref{eq:model} is well-established.
For instance, if $g$ does not depend on $u$, \eqref{eq:model} can be separated into two independent Dirichlet problems on $D_1$ and $D_2$ respectively, and the theory for the Poisson equation with Dirichlet boundary conditions applies, cf. \cite{GT2001,Grisvard85}.
Alternatively, one may formulate \eqref{eq:model} as a Dirichlet problem on $\Omega$ constrained by $u=g$ on $\Gamma$ and $u=0$ on $\partial\Omega$, which leads to a saddle-point formulation, see e.g. \cite{BrezziFortin,GiraultGlowinski1995}.
\begin{figure}
\setlength{\unitlength}{150pt}
 \begin{picture}(1,1)%
    \put(0,0){\includegraphics[width=\unitlength]{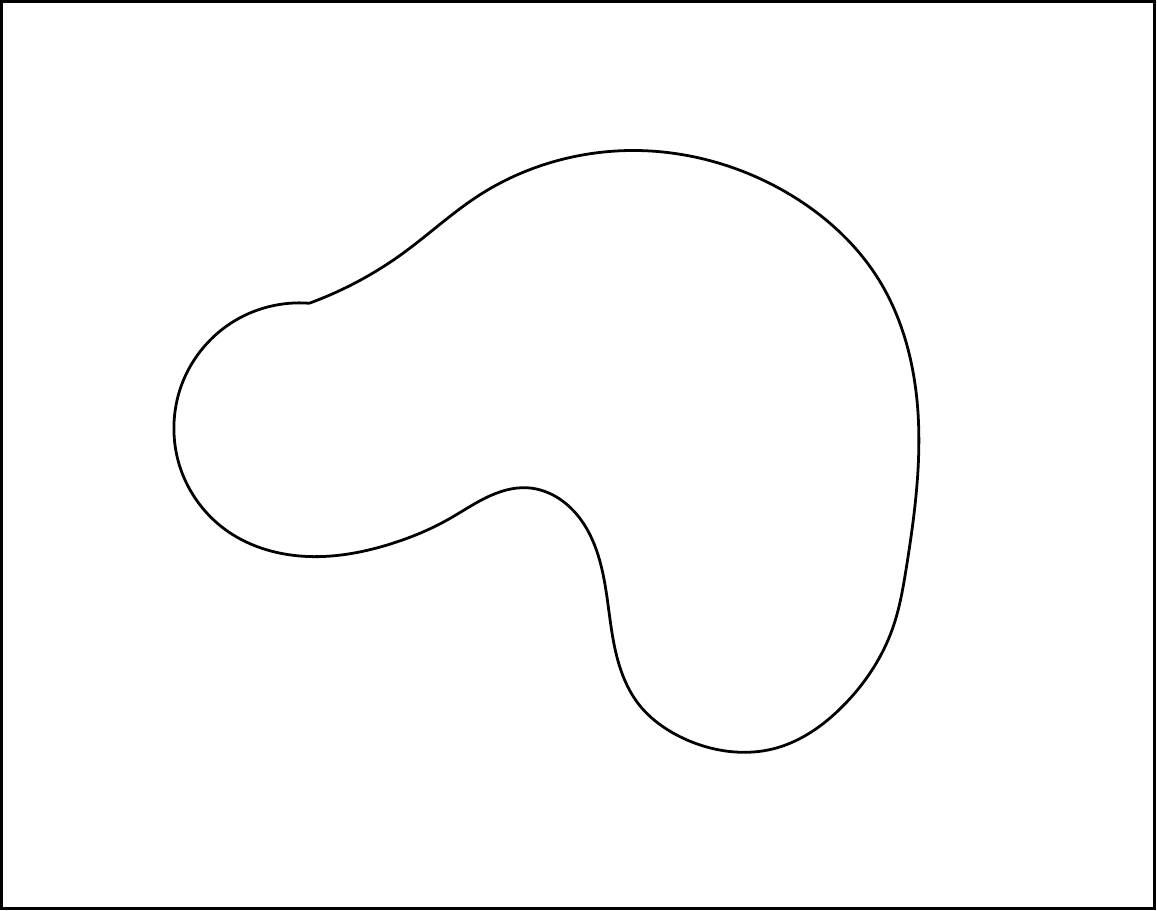}}%
    \put(0.02718471,0.03404955){$\Omega$}%
    \put(0.19022446,0.59696569){$D_2$}%
    \put(0.54719572,0.49570944){$D_1$}%
    \put(0.75485683,0.16080546){$\Gamma$}%
  \end{picture}%
  \caption{\label{fig:domain} Sketch of the geometry. $\Omega=D_1\cup\Gamma\cup D_2$.}
\end{figure}

The numerical approximation of \eqref{eq:model} has been investigated intensively.
A first set of numerical algorithms relies on a triangulation of $\Omega$ which is sufficiently aligned with the interface, i.e. $\Gamma$ is approximated by a polygon, the segments of which are edges (or faces) of the elements; see e.g. \cite{BarrettElliot1987,BrambleDupontThomee1972,BrambleKing1994,BrambleKing1997,Ciarlet2002,Thomee1973}. 
The construction of such a triangulation might be expensive or difficult in practice. 
Furthermore, if $\Gamma=\Gamma(t)$ depends on time, problems similar to \eqref{eq:model} need to be solved in each time step, and the triangulation has to be updated accordingly. In addition, one has to interpolate the data on the varying meshes in this case.
Therefore, a lot of research has been conducted to construct accurate methods employing meshes which are not aligned with $\Gamma$ but possess a ``simple'' structure and are fixed throughout the simulation; see for instance
the immersed boundary method \cite{Peskin1977}, 
the immersed interface method \cite{LeVequeLin1994,LiLinWu2003}, 
immersed finite elements \cite{Li2003,LinLinZhang2015,WuLiLai2011},
the fictitious domain method \cite{Babuska72,GiraultGlowinski1995,GlowinskiPanPeriaux1994,MaitreTomas1999}, 
the unfitted finite element method \cite{BarrettElliot1987,ChenZou1998,HansboHansbo2002,LiMelenkWohlmuthZou2010}, 
the finite cell method \cite{ParvizianDuesterRank2007},
unfitted discontinuous Galerkin methods \cite{BastianEngwer2009},
or composite finite elements \cite{HackbuschSauter1997,LiehrPreusserRumpfSauterSchwen2009}, and the references provided there. 

In this work we will focus on a diffuse interface method for solving \eqref{eq:model}, see for instance \cite{AbelsLamStinner2015,BES2014,ElliottStinner2009,ElliotStinnerStylesWelford2011,LervagLowengrub2014,LiLowengrubRaetzVoigt2009,ReuterHillHarrison2012}.
In this method the sharp interface condition $u=g$ on $\Gamma$ is replaced by suitable conditions on $u-g$ on a diffuse layer centered around $\Gamma$.
Similar techniques have also been applied for solving coupled bulk-surface differential equations \cite{AbelsLamStinner2015} or surface differential equations \cite{Bertalmio2001,Chernyshenko2013}.

The constraint $u=g$ on $\Gamma$ is equivalent to the condition $\|u-g\|_{L^2(\Gamma)}=0$.
In order to relax this condition on the sharp interface, we define the signed distance function
\begin{align*}
 d_\Gamma(x) = \begin{cases}
                -{\rm dist}(x,\Gamma),& x\in D_1,\\
                +{\rm dist}(x,\Gamma),& x\in D_2,
               \end{cases}
\end{align*}
and we let $S:\RR\to\RR$ be such that $S(t)=t$ for $|t|<1$ and $S(t)={\rm sign}(t)$ for $|t|\geq 1$.
The width of the diffuse layer is characterized by a positive parameter $\eps$. This induces a regularized indicator function of $D_1$
\begin{align*}
 \chi_{D_1}(x) \approx \omega^\eps(x) = \frac{1}{2}\big(1+S(-\frac{d_\Gamma(x)}{\eps})\big),\quad x\in \Omega.
\end{align*}
Formally $\d\Gamma = |\nabla \chi_D|\d x\approx |\nabla\omega^\eps|\d x$. 
Using the $\eps$-tubular neighborhood $\Se={\rm supp}(|\nabla\omega^\eps|)$ of $\Gamma$, this leads to the following approximation for integrals along the interface
\begin{align*}
 \|u-g\|_{L^2(\Gamma)}^2 \approx \frac{1}{2\eps} \int_{\Se}|u-g|^2 \d x.
\end{align*}
The reader might find a more detailed derivation of this approximation and other choices of $S$ in \cite{BES2014}.
We further notice that, to make this approximation well-defined, we need the function $g$ to be defined on the diffuse layer. If $g$ is defined on $\Gamma$ only, one has to use a suitable extension; for instance a local extension is given by $\tilde g(x+d_\Gamma(x)\nabla d_\Gamma(x))=g(x)$ for $x\in\Gamma$, i.e. $\tilde g$ is constant off the interface.
Thus, replacing the sharp interface constraint by the diffuse interface constraint $\int_{\Se}|u-g|^2 \d x=0$, which amounts to $u=g$ on $\Se$, we are concerned with the following Dirichlet problem: Find $u^\eps\in H^1_0(\Omega)$ such that
\begin{align}\label{eq:Dirichlet_diffuse}
  -\Delta u^\eps = f \quad \text{ in } \Omega\setminus\Se,\quad u^\eps=g\quad\text{in } \Se.
\end{align}
Note that the particular choice of $\omega^\eps$ is not important as long as $\Se={\rm supp}(|\nabla\omega^\eps|)$ is a $\eps$-tubular neighborhood of $\Gamma$, i.e. methods using double obstacle potentials to regularize the indicator function of $\chi_D$ will essentially lead to the same method.

The main purpose of this paper is to estimate the errors introduced by this diffuse interface method; (i) on the continuous level and (ii) in a finite dimensional setting when using the finite element method, see below.
The first result in this direction is the following approximation result on the continuous level.
\begin{theorem}\label{thm:main1}
 Let $f\in L^2(\Omega)$ and $g\in H^2(\Omega)$, and let $u$ be the solution to \eqref{eq:model}, and let $u^\eps$ be the solution to \eqref{eq:Dirichlet_diffuse}.
 Then there exists a constant $C>0$ independent of $\eps$ such that 
 \begin{align*}
  \frac{1}{\eps}\|u-u^\eps\|_{L^2(\Omega)} + \frac{1}{\sqrt{\eps}}\|\nabla u-\nabla u^\eps\|_{L^2(\Omega)} &\leq C\big( \|f\|_{L^2(\Omega)}+\|g\|_{H^2(\Omega)}\big).
 \end{align*}
\end{theorem}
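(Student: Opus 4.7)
The plan is to combine a Céa-type best-approximation argument for the $H^1$-bound with a problem-adapted Aubin--Nitsche duality for the $L^2$-bound. Subtracting the weak forms of \eqref{eq:model} and \eqref{eq:Dirichlet_diffuse}, and noting that every $\phi\in H^1_0(\Omega\setminus\Se)$ is admissible for both tests because $\Gamma\subset\Se$, one first obtains the orthogonality
\[
\int_\Omega \nabla(u-u^\eps)\cdot\nabla\phi\,\d x = 0\qquad\text{for all }\phi\in H^1_0(\Omega\setminus\Se).
\]
A standard expansion then shows that $u^\eps$ minimises $v\mapsto\|\nabla(u-v)\|_{L^2(\Omega)}$ over all $v\in H^1_0(\Omega)$ with $v=g$ on $\Se$, so it suffices to exhibit one competitor with the claimed rate.

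I would take $\tilde u^\eps:=u+\rho(g-u)$, where $\rho$ is a smooth cutoff equal to $1$ on $\Se$, supported in the twice-as-wide tube $\mathcal{S}^{2\eps}$ (compactly contained in $\Omega$ for small $\eps$), and satisfying $|\nabla\rho|\le C/\eps$. The required bound on $\nabla(u-\tilde u^\eps)=(g-u)\nabla\rho+\rho\nabla(g-u)$ rests on two tubular estimates. Since $u=g$ on $\Gamma$, a fibre-wise Poincaré inequality in tubular coordinates yields
\[
\|u-g\|_{L^2(\mathcal{S}^{2\eps})}\le C\eps\,\|\nabla(u-g)\|_{L^2(\mathcal{S}^{2\eps})},
\]
while elliptic regularity for $u$ on $D_1$ and $D_2$, combined with the coarea formula and uniform trace bounds on the level sets of $d_\Gamma$, gives
\[
\|\nabla(u-g)\|_{L^2(\mathcal{S}^{2\eps})}\le C\sqrt{\eps}\bigl(\|f\|_{L^2(\Omega)}+\|g\|_{H^2(\Omega)}\bigr).
\]
Inserting both, using $|\nabla\rho|\le C/\eps$, and appealing to the best-approximation inequality yields the $H^1$-estimate.

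To upgrade this to $L^2$, I would introduce the adapted dual problem: find $z\in H^1_0(\Omega)$ with $z|_\Gamma=0$ solving $-\Delta z=u-u^\eps$ in $\Omega\setminus\Gamma$. Prescribing $z=0$ on $\Gamma$ decouples the dual problem into two ordinary Dirichlet problems on $D_1$ and $D_2$, whose elliptic regularity yields $\|z\|_{H^2(D_1\cup D_2)}\le C\|u-u^\eps\|_{L^2(\Omega)}$. Since $u|_\Gamma=u^\eps|_\Gamma=g|_\Gamma$, piecewise integration by parts leaves no contribution at $\Gamma$, so $\|u-u^\eps\|_{L^2(\Omega)}^2=\int_\Omega\nabla(u-u^\eps)\cdot\nabla z\,\d x$. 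Subtracting the orthogonality tested against $(1-\rho)z\in H^1_0(\Omega\setminus\Se)$ concentrates this integral on $\mathcal{S}^{2\eps}$, and applying the same tubular estimates to $z$ (admissible because $z|_\Gamma=0$) yields $\|\nabla(\rho z)\|_{L^2(\Omega)}\le C\sqrt{\eps}\|z\|_{H^2(D_1\cup D_2)}$. Combining with the already-established $H^1$-rate for $u-u^\eps$ produces $\|u-u^\eps\|_{L^2(\Omega)}^2\le C\eps\|u-u^\eps\|_{L^2(\Omega)}(\|f\|_{L^2(\Omega)}+\|g\|_{H^2(\Omega)})$, which is the claim.

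The main obstacle, in my view, is the design of the dual problem: without the artificial constraint $z|_\Gamma=0$ one loses both the boundary-term cancellation at $\Gamma$ and, more importantly, the Hardy/Poincaré control on $z$ in the tube, which is precisely the mechanism that converts the $H^1$-rate $\sqrt{\eps}$ into the optimal $L^2$-rate $\eps$. The tubular Poincaré and coarea estimates themselves are classical, but they have to be applied twice—to $u-g$ for the $H^1$-argument and to $z$ for the duality—in both cases exploiting that the relevant function vanishes on $\Gamma$.
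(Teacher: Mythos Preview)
Your proof is correct and follows a genuinely different route. The paper reduces to $g=0$, works on each subdomain $D=D_i$ separately, and for the $H^1$-bound does \emph{not} argue via C\'ea/best-approximation: instead it tests the identity
$\int_{D}\nabla(u-u^\eps)\cdot\nabla v\,\d x=\int_{\Gamma_\eps} f v\,\d x-\int_{\partial D^\eps}\partial_n u^\eps\, v\,\d\sigma$
with $v=u-u^\eps$ and controls $\|\partial_n u^\eps\|_{L^2(\partial D^\eps)}$ through the uniform $H^2$-regularity of $u^\eps$ on the perturbed domain $D^\eps$ (Lemma~\ref{lem:regularity_diffuse}) together with the appendix trace bounds. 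For the $L^2$-estimate the paper invokes Aubin--Nitsche in the form \eqref{eq:best_approx_dual} with $H^1_0(D^\eps)$ playing the role of the approximating space, and chooses as competitor the solution $z^\eps$ of the dual equation on $D^\eps$, so that $\|\nabla z-\nabla z^\eps\|_{L^2(D)}\le C\sqrt{\eps}\|u-u^\eps\|_{L^2(D)}$ is literally Theorem~\ref{thm:errorH1} applied to the dual pair. The essential methodological difference is that your cutoff construction uses only the $H^2$-regularity of $u$ and of $z$ on the \emph{fixed} domains $D_1,D_2$, never any regularity of $u^\eps$ on the $\eps$-dependent domain; this is more robust when uniform elliptic estimates on moving domains are unavailable or awkward. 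Conversely, the paper's version makes Lemma~\ref{lem:regularity_diffuse} do double duty (the same uniform $H^2$-bound is needed anyway for the polygonal perturbations in Sections~\ref{sec:interface}--\ref{sec:discrete_error}) and obtains the $L^2$-step by a one-line reuse of the $H^1$-theorem, without having to repeat the tubular estimates for $z$.
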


The numerical approximation of \eqref{eq:Dirichlet_diffuse} is still not straight-forward as the (sufficiently exact) integration over $\Se$ is basically not easier than the integration along $\Gamma$.
In order to obtain an efficient numerical scheme let $\T_h$ be a shape regular triangulation of $\Omega$, and let 
\begin{align}
  \Se_h=\bigcup_{T\in\T_h: \Se\cap T\neq \emptyset} T \label{eq:Seh}
\end{align}
denote the union of all elements having non-empty intersection with $\Se$; see Figure~\ref{fig:sketch} below for one instance of $\Se$ and $\Se_h$.
Here, $h=\max\{{\rm diam}(T): T\in\T_h\}$ denotes the mesh-size parameter.
We are then concerned with the following approximate Dirichlet problem: Find $u^{\eps,h}\in H^1_0(\Omega)$ such that
\begin{align}\label{eq:dirichlet_diffuse_h}
  -\Delta\ueh = f \quad \text{ in } \Omega\setminus\Se_h,\qquad \ueh=g\quad\text{in } \Se_h. 
\end{align}
Note that \eqref{eq:dirichlet_diffuse_h} is still formulated on the continuous level. The difference to \eqref{eq:Dirichlet_diffuse} is that we have replaced the domain $\Omega\setminus\Se$, which has the same smoothness as $\Omega\setminus\Gamma$, by $\Omega\setminus\Se_h$, which is a polygonal domain.
Problems similar to \eqref{eq:dirichlet_diffuse_h} for the approximation of \eqref{eq:model} have been considered previously e.g. in \cite{BrambleDupontThomee1972} for $n=2$; see also \cite{Blair1973} for very general approximating domains. We will employ similar techniques as in \cite{Blair1973} to derive Theorem~\ref{thm:main2} from Theorem~\ref{thm:main1}.
\begin{theorem}\label{thm:main2}
 Let $f\in L^2(\Omega)$ and $g\in H^2(\Omega)$, and let $u$ be the solution to \eqref{eq:model}, and let $\ueh$ be the weak solution to \eqref{eq:dirichlet_diffuse_h}.
 Then for some $C>0$ independent of $\eps$, $h$, and $\delta$ there holds
 \begin{align*}
  \frac{1}{\eps+\delta}\|u-\ueh\|_{L^2(\Omega)} + \frac{1}{\sqrt{\eps+\delta}}\|\nabla u-\nabla \ueh\|_{L^2(\Omega)} &\leq C\big( \|f\|_{L^2(\Omega)}+\|g\|_{H^2(\Omega)}\big),
 \end{align*}
 where $\delta = \max\{{\rm diam}(T): T\cap\partial\Se\neq \emptyset,\ T\in \T_h\}$.
\end{theorem}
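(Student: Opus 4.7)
The plan is to reduce to Theorem~\ref{thm:main1} by the triangle inequality, splitting $u - \ueh = (u - u^\eps) + (u^\eps - \ueh)$ in both norms. Theorem~\ref{thm:main1} controls the first summand; the task is therefore to estimate the auxiliary error $e = u^\eps - \ueh$ by $\sqrt{\eps+\delta}$ in the $H^1$-seminorm and by $\eps+\delta$ in $L^2$.

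The main tool is a Galerkin-type orthogonality: since both $u^\eps$ and $\ueh$ satisfy $-\Delta u = f$ on their respective interior and equal $g$ on the diffuse layer, one has $\int_\Omega \nabla e \cdot \nabla \phi\,\d x = 0$ for every $\phi \in H^1_0(\Omega \setminus \Se_h)$, extended by zero to $\Omega$. The function $e$ itself vanishes only on $\partial\Omega$ and on $\Se$ and not on all of $\Se_h$, so I would construct an $H^1$-extension $\eta = (u^\eps - g)\chi$, where $\chi$ is a Lipschitz cutoff equal to $1$ on $\Se_h$, supported in a slightly larger tubular neighborhood $\Se_{h,2}$ of $\Se_h$, and satisfying $|\nabla\chi| \leq C/\delta$. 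Then $\eta = u^\eps - g = e$ on $\Se_h$ and $\eta = 0$ near $\partial\Omega$ (for $\eps + h$ small), so $e - \eta \in H^1_0(\Omega\setminus\Se_h)$ and orthogonality gives $\|\nabla e\|_{L^2(\Omega)} \leq \|\nabla\eta\|_{L^2(\Omega)}$.

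To extract the rate, set $v = u^\eps - g$, which vanishes on $\Se$. A Friedrichs-type inequality across the strip $\Se_{h,2} \setminus \Se$ of width $O(\delta)$, obtained by integrating $|v|^2$ along normal segments from points of $\Se_{h,2}\setminus\Se$ to $\partial\Se$, yields $\|v\|_{L^2(\Se_{h,2})} \leq C\delta\|\nabla v\|_{L^2(\Se_{h,2})}$; this absorbs the $\delta^{-1}$ from $\nabla\chi$ and leaves $\|\nabla\eta\|_{L^2(\Omega)} \leq C\|\nabla v\|_{L^2(\Se_{h,2})}$. Splitting $v = (u^\eps - u) + (u - g)$, the first piece is controlled globally by Theorem~\ref{thm:main1} and contributes $C\sqrt{\eps}$. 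For the second piece I would invoke the piecewise $H^2$-regularity of $u$ on $D_1,D_2$ and the $H^2$-regularity of $g$ on $\Omega$ together with a co-area/trace argument: parametrizing the tubular neighborhood by parallel surfaces $\Gamma_s$ with $s \in [-c(\eps+\delta), c(\eps+\delta)]$, the trace inequality bounds $\|\nabla(u-g)\|_{L^2(\Gamma_s)} \leq C(\|f\|_{L^2}+\|g\|_{H^2})$ uniformly in $s$, and integrating in $s$ produces the desired $\sqrt{\eps+\delta}$-factor. This thin-strip estimate is the main technical obstacle, because piecewise-$H^2$ regularity does not embed into $W^{1,\infty}$ in three dimensions, so any naive ``measure of the strip'' estimate is too lossy.

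For the $L^2$-bound I would use a duality argument tailored to the diffuse problem, in the spirit of the problem-adapted duality mentioned in the abstract. Let $z \in H^1_0(\Omega) \cap H^2(\Omega)$ solve $-\Delta z = e$ in $\Omega$, and let $z^{\eps,h} \in H^1_0(\Omega)$ solve the meshed diffuse adjoint $-\Delta z^{\eps,h} = e$ on $\Omega\setminus\Se_h$ with $z^{\eps,h} = 0$ on $\Se_h$. Since $z^{\eps,h}$ is an admissible test function for the Galerkin orthogonality, integration by parts gives $\|e\|_{L^2}^2 = \int_\Omega \nabla e \cdot \nabla(z - z^{\eps,h})\,\d x$. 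Splitting $z - z^{\eps,h} = (z - z^\eps) + (z^\eps - z^{\eps,h})$, Theorem~\ref{thm:main1} applied with data $(f,g) = (e,0)$ bounds the first part by $C\sqrt{\eps}\|e\|_{L^2}$, and the $H^1$-estimate just established (again with $g=0$, so the $(u-g)$-contribution is absent) bounds the second part by $C\sqrt{\eps+\delta}\|e\|_{L^2}$. Hence $\|e\|_{L^2} \leq C\sqrt{\eps+\delta}\|\nabla e\|_{L^2} \leq C(\eps+\delta)(\|f\|_{L^2} + \|g\|_{H^2})$, which combined with Theorem~\ref{thm:main1} closes the proof.
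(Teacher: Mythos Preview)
Your overall strategy is correct and leads to the stated bounds, but it takes a noticeably different and more laborious route than the paper.

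The paper's argument for the auxiliary error $u^\eps-\ueh$ is a one-line C\'ea estimate with a clever comparison function. Working (after subtracting $g$) on $D=D_1$, it exploits the nesting $D^{\eps+\delta}\subset\Deh\subset\De$: since $H^1_0(\Deh)$ is a closed subspace of $H^1_0(\De)$, C\'ea's lemma and the choice $v=u^{\eps+\delta}\in H^1_0(D^{\eps+\delta})\subset H^1_0(\Deh)$ give
\[
\|\nabla u^\eps-\nabla\ueh\|_{L^2(D)}\le\|\nabla u^\eps-\nabla u^{\eps+\delta}\|_{L^2(D)}\le C\sqrt{\delta}\,\|f\|_{L^2(D)},
\]
the last step being Theorem~\ref{thm:main1} applied with $D$ replaced by $\De$. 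No cutoffs, no Friedrichs inequality, no thin-strip trace estimate are needed, and one even gets the sharper factor $\sqrt{\delta}$ rather than $\sqrt{\eps+\delta}$. The $L^2$-bound is then Aubin--Nitsche on $\De$, with the best-approximation of the dual solution handled by the same trick (comparison with $z^{\eps+\delta}$).

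Your construction of the competitor $u^\eps-\eta$ via a Lipschitz cutoff and a Poincar\'e/Friedrichs inequality on the strip $\Se_{h,2}\setminus\Se$ is a valid alternative; it is essentially C\'ea with a hand-built comparison function, and it is more robust in situations where an auxiliary PDE solution like $u^{\eps+\delta}$ is unavailable. The price is the thin-strip estimate for $\nabla(u-g)$, which you correctly identify as the main technical point. One small slip: in your duality step the parenthetical ``so the $(u-g)$-contribution is absent'' is not right---for the adjoint with data $(e,0)$ that contribution becomes $\|\nabla z\|_{L^2(\Se_{h,2})}$, which is not zero. It is, however, still bounded by $C\sqrt{\eps+\delta}\,\|z\|_{H^2(\Omega)}\le C\sqrt{\eps+\delta}\,\|e\|_{L^2(\Omega)}$ via the same thin-strip argument (and is in fact easier here since $z\in H^2(\Omega)$ globally), so your stated bound $C\sqrt{\eps+\delta}\,\|e\|_{L^2}$ for $\|\nabla(z^\eps-z^{\eps,h})\|$ remains valid.
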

Concerning the numerical approximation of \eqref{eq:dirichlet_diffuse_h}, we denote by $\Vh=P_1(\T_h)\cap H^1_0(\Omega)$ a standard finite element space of continuous piecewise affine functions.
For the incorporation of the interface values we denote by $I_hg \in P_1(\T_h)\cap H^1(\Omega)$ the nodal interpolant of $g$, and let
\begin{align}
 \Vehg=\{v_h \in \Vh: v_{h\mid T}=(I_h g)_{\mid T} \text{ for } T\cap \Se\neq \emptyset\}.
\end{align}
The Galerkin approximation $u^\eps_h\in\Vehg$ of $\ueh$ is defined by the variational problem
\begin{align}
 \int_{\Omega}\nabla u^\eps_h \cdot\nabla v_h \d x = \int_{\Omega} f v_h \d x\quad\text{for all } v_h\in \Vehn.\label{eq:Galerkin}
\end{align}
By construction $\Se_h$ is aligned with $\T_h$. Therefore, integration of the bilinear form in \eqref{eq:Galerkin} can be performed by standard techniques without additional variational crimes.
Note also, that the extension of $g$ is only needed on $\Se$.
Using Ce\'a's lemma, Theorem~\ref{thm:main2} and problem adapted duality arguments, we will also prove the following theorem.
\begin{theorem}\label{thm:main3}
Let $f\in L^2(\Omega)$ and $g\in H^2(\Omega)$, and let $u$ be the solution to \eqref{eq:model}, and let $u_h^\eps \in\Vehg$ be defined via \eqref{eq:Galerkin}
 Then for some $C>0$ independent of $\eps$, $\delta$, $\kappa$, and $h$ there holds
 \begin{align*}
  \frac{1}{\lambda^2}\|u-u^\eps_h\|_{L^2(\Omega)} + \frac{1}{\lambda}\|\nabla u-\nabla u_h^\eps\|_{L^2(\Omega)} &\leq C\big( \|f\|_{L^2(\Omega)}+\|g\|_{H^2(\Omega)}\big),
 \end{align*}
 where $\lambda=\sqrt{\eps+\delta}+\kappa^{\frac{2}{3}}+h$,
 $\kappa = \max\{{\rm diam}(T): T\cap\partial\mathcal{S}^{\eps+\delta}\neq \emptyset, \ T\in \T_h\}$ and $h = \max\{{\rm diam}(T): T\in \T_h\}$.
\end{theorem}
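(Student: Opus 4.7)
The plan is to split $u - u_h^\eps = (u - \ueh) + (\ueh - u_h^\eps)$, where the first summand is controlled by Theorem~\ref{thm:main2} and the second is the Galerkin error on the polygonal domain $\Omega\setminus\Se_h$, to be estimated by C\'ea's lemma in the $H^1$-norm and by a problem-adapted Aubin--Nitsche duality in the $L^2$-norm. Galerkin orthogonality is the first ingredient: any $v_h\in\Vehn$ vanishes on $\Se_h$, so splitting $\int_\Omega \nabla\ueh\cdot\nabla v_h\,\d x$ along $\Se_h$ and its complement and applying Green's formula on $\Omega\setminus\Se_h$ (using $v_h=0$ on $\partial\Se_h\cup\partial\Omega$) yields $\int_\Omega \nabla\ueh\cdot\nabla v_h\,\d x=\int_\Omega f v_h\,\d x$. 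Together with \eqref{eq:Galerkin} and the fact that differences of elements of $\Vehg$ lie in $\Vehn$, one obtains
\[
\|\nabla(\ueh - u_h^\eps)\|_{L^2(\Omega)}\leq \inf_{v_h\in\Vehg}\|\nabla(\ueh - v_h)\|_{L^2(\Omega)}.
\]
A natural approximant is $v_h = I_h\ueh$, which lies in $\Vehg$ because $\ueh = g$ at every node of any element intersecting $\Se$.

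To estimate the resulting interpolation error I would partition $\Omega$ into the bulk $\Omega\setminus\mathcal{S}^{\eps+\delta}$, where $\ueh$ is $H^2$-regular by interior elliptic estimates (the distance to $\partial\Se_h$ is of order $\delta$), and the layer $\mathcal{S}^{\eps+\delta}$, whose intersecting elements have diameter at most $\kappa$. On the bulk, standard $P_1$-interpolation contributes $O(h)(\|f\|_{L^2(\Omega)}+\|g\|_{H^2(\Omega)})$. On the layer, the limited regularity of $\ueh$ near the non-convex polygon $\Omega\setminus\Se_h$ forces the use of the $H^1$-stable bound $\|\nabla(\ueh-I_h\ueh)\|_{L^2(T)}\lesssim|\ueh|_{H^1(T)}$; combining this element-wise estimate with a Poincar\'e/trace inequality on the thin strip (using $\ueh=g$ on $\Se_h$ and a layer thickness of order $\eps+\delta$) produces a contribution of order $\kappa^{2/3}(\|f\|_{L^2(\Omega)}+\|g\|_{H^2(\Omega)})$. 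Adding this to the bound of Theorem~\ref{thm:main2} yields the $H^1$-part of the result with $\lambda=\sqrt{\eps+\delta}+\kappa^{2/3}+h$.

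For the $L^2$-estimate I would set up a problem-adapted dual problem on the enlarged tubular neighborhood $\mathcal{S}^{\eps+\delta}$: let $z\in H^1_0(\Omega)$ solve $-\Delta z = u - u_h^\eps$ in $\Omega\setminus\mathcal{S}^{\eps+\delta}$ with $z=0$ in $\mathcal{S}^{\eps+\delta}$, and let $z_h$ denote its Galerkin approximation on the analogous discrete space. Applying Theorem~\ref{thm:main2} to $z$ (with $\eps+\delta$ in place of $\eps$ and $\kappa$ in place of $\delta$), together with Galerkin orthogonality against an interpolant of $z$, reduces $\|u-u_h^\eps\|_{L^2(\Omega)}^2$ to a product of two $\lambda$-type $H^1$-bounds, yielding $\lambda^2$. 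Using the enlarged tube $\mathcal{S}^{\eps+\delta}$ in the dual (rather than $\Se$ or $\Se_h$) is the problem-adapted aspect: it is the smallest tubular neighborhood compatible with the quantitative bounds of Theorem~\ref{thm:main2} applied uniformly to both the primal and dual problems.

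The hardest step will be the layer estimate, specifically extracting the exponent $\tfrac{2}{3}$ in $\kappa$ from the regularity theory of Poisson's equation on the non-convex polygon $\Omega\setminus\Se_h$ (including, for $n=3$, edge singularities of the cut cells). A secondary difficulty is to ensure that the dual solution $z$ satisfies the same quantitative layer bounds in a form usable by Aubin--Nitsche, so that the duality argument truly gains $\lambda^2$ rather than merely $\lambda$.
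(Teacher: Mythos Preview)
Your overall decomposition $u-u_h^\eps=(u-\ueh)+(\ueh-u_h^\eps)$ and the use of C\'ea's lemma match the paper. The gap is in how you bound the best-approximation error. You propose $v_h=I_h\ueh$ and invoke interior elliptic regularity to get a uniform $H^2$-bound for $\ueh$ on $\Omega\setminus\mathcal{S}^{\eps+\delta}$. This fails: $\partial\Se_h$ can touch $\partial\mathcal{S}^{\eps+\delta}$ (an element of $\Se_h$ may have a vertex exactly at distance $\eps+\delta$ from $\Gamma$), so the distance in the interior estimate is not bounded below and the $H^2$-constant blows up. Even where the distance is positive it is at best $O(\delta)$, which would produce factors $\delta^{-1}$ in the $H^2$-norm and destroy the rate. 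The paper avoids this entirely by never interpolating $\ueh$: it inserts the auxiliary function $u^{\eps+\delta}$ (the solution on the \emph{smooth} domain $\Omega\setminus\mathcal{S}^{\eps+\delta}$, extended by $g$), bounds $\|\ueh-u^{\eps+\delta}\|_{H^1}$ by $C\sqrt{\delta}$ via the already-proven domain-perturbation estimates, and then interpolates $u^{\eps+\delta}$, which is piecewise $H^2$ with uniform norm.

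Relatedly, your mechanism for the exponent $\tfrac23$ in $\kappa$ is not correct. A Poincar\'e/trace inequality on a strip of width $\eps+\delta$ yields powers of $\eps+\delta$, not of $\kappa$, and the $H^1$-stability bound $\|\nabla(\ueh-I_h\ueh)\|_{L^2(T)}\lesssim |\ueh|_{H^1(T)}$ gains nothing. In the paper the $\kappa^{2/3}$ arises on elements $T$ cut by $\partial D^{\eps+\delta}$: there $u^{\eps+\delta}\in W^{1,p}(T)$ (via $H^2\hookrightarrow W^{1,p}$ on each side, $p=6$ for $n=3$), so H\"older gives $\|\cdot\|_{H^1(T)}\le |T|^{1/2-1/p}\|\cdot\|_{W^{1,p}(T)}=|T|^{1/3}\|\cdot\|_{W^{1,p}(T)}$, and $|T|^{1/3}\le \kappa^{2/3}$. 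For the $L^2$-estimate the paper poses the dual problem on $\De$ (not $\mathcal{S}^{\eps+\delta}$), integrates by parts, and carefully treats the boundary term $\int_{\partial\De}\partial_n z^{\eps,h}\,\eh\,\d\sigma$ using that $\eh=I_hg-g$ on $\partial\De$; your sketch omits this boundary term, which does not vanish because $\eh\notin H^1_0(\De)$.
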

In view of the above theorems, the diffuse interface method investigated here is robust in the choice of $\eps$ and $h$. In particular, $\eps$ might be chosen arbitrarily small such that the overall approximation error is governed by approximation errors from the finite element approximation. 
In this case the described method is similar to the method described in \cite{LiMelenkWohlmuthZou2010}.
However, we do not impose the restriction that our triangulation is $\delta$-resolved or $\eps$-resolved in the sense of \cite[Definition~3.1]{LiMelenkWohlmuthZou2010}, and, also, $\partial\Se_h\subset \mathcal{S}^\delta$ basically induces two polygonal interfaces approximating $\Gamma$, whereas in \cite{LiMelenkWohlmuthZou2010}, one polygonal interface is defined. The $H^1$-estimate in Theorem~\ref{thm:main3} has a similar form as the estimate proven in \cite[Theorem~4.1]{LiMelenkWohlmuthZou2010}, but here the value of $\delta$ is already determined by the mesh size. The value of $\delta$ in \cite{LiMelenkWohlmuthZou2010} stems from the assumption that the triangulation is $\delta$-resolved. 
If the finite element grid is aligned with $\Gamma$, i.e. if the vertices defining $\partial\Se_h$ are points on $\partial\Se$, stronger approximation error estimates can be derived \cite{BarrettElliot1987,BrambleKing1994,BrambleKing1997,Thomee1973}.

Let us compare our results with those in the literature about diffuse domain methods for solving the Dirichlet problem \eqref{eq:model} in $D_1$. In \cite{BES2014} the following variational problem has been considered: Find $\tilde u^\eps_\alpha\in H^\eps$ such that
\begin{align}\label{eq:BES2014}
 \int_{\Omega} \nabla \tilde u^\eps_\alpha \cdot \nabla v \omega^\eps \d x +  \frac{1}{\alpha}\int_\Omega  \tilde u^\eps_\alpha v|\nabla\omega^\eps| \d x= \int_\Omega f v \omega^\eps \d x+  \frac{1}{\alpha}\int_\Omega g v|\nabla\omega^\eps| \d x
\end{align}
for all $v\in H^\eps$, where $H^\eps$ is a weighted Sobolev space, which consists of functions in the weighted space $L^2(\Omega;\omega^\eps)$ having also weak derivatives in this space; details are provided in \cite{BES2014}. Note that functions in $H^\eps$ can develop singularities where $\omega^\eps=0$. 
The analysis in \cite{BES2014} requires a careful balance between $\alpha$ and $\eps$. Provided $u\in W^{3,\infty}(D_1)$, the optimal choice of $\alpha=\eps^{3/4}$ gives a rate $\|\tilde u^\eps_\alpha-u\|_{H^1(D_1)} = O(\eps^{3/4})$ in \cite{BES2014}.
Let us point out, that, under additional assumptions, a rate $O(\eps)$ for the $L^2$ error could be obtained for $n\leq 2$. If $u\notin W^{3,\infty}(D_1)$ but only in $H^2(D_1)$, which is essentially what we need here, the convergence rates of \cite{BES2014} get worse and depend on the dimension $n$ due to embedding theorems.
However, the computations in \cite{ReuterHillHarrison2012} show a convergence for the $L^2$-error of order $O(\eps)$ for a similar diffuse domain method, where the diffuse interface condition is incorporated by a penalty method similar to \eqref{eq:BES2014}; see Remark~\ref{rem:saddle-point} below.
In \cite{FranzGaertnerRoosVoigt2012} an $L^\infty$-error estimate of order $\eps^{s}$ for $s<1$ arbitrary has be proven for a diffuse domain method with a double well regularization for $\chi_{D_1}$ and $n=1$. As proven in Theorem~\ref{thm:errorlinfty_h} below, the diffuse interface method considered here yields $\|u-\ueh\|_{L^\infty(\Omega\setminus \Se_h)}= O(\eps+\delta)$. 
We consider only Dirichlet boundary conditions here, for Neumann or Robin boundary conditions let us refer to \cite{BES2014}, where under appropriate regularity conditions on the data even superlinear convergence has been shown for the diffuse domain method on the continuous level.
 
The rest of this paper is structured as follows. In Section~\ref{sec:dirichlet_revisited} we introduce further notation and recall some solvability and regularity results for \eqref{eq:model} and \eqref{eq:Dirichlet_diffuse}.
In Section~\ref{sec:continuous_error} we derive $H^1$- and $L^2$-error estimates, which prove Theorem~\ref{thm:main1}.
Theorem~\ref{thm:main2} is proven in Section~\ref{sec:interface}. For completeness we also give an $L^\infty$-error estimate.
Galerkin approximations are investigated in Section~\ref{sec:discrete_error}, where also Theorem~\ref{thm:main3} is proved.
Our findings are supported by numerical examples in Section~\ref{sec:numerics}.
Section~\ref{sec:conclusion} gives some conclusions. The paper closes with an appendix recalling some estimates for the errors introduced by the diffuse interface method.

\section{The Dirichlet problem revisited}\label{sec:dirichlet_revisited}
Let $\tilde\Omega\subset\RR^n$ be some bounded domain with Lipschitz boundary.
By $L^2(\tilde \Omega)$ we denote the Lebesgue space of square integrable functions. Accordingly, for an integer $k\geq 1$, $H^k(\tilde\Omega)=W^{k,2}(\tilde\Omega)$ is the set of functions in $L^2(\tilde\Omega)$ having also weak derivatives up to order $k$ in $L^2(\tilde\Omega)$. These spaces are endowed with the standard norms, i.e.
\begin{align*}
 \|u\|_{L^2(\tilde\Omega)}^2= \int_{\tilde \Omega} |u|^2 \d x,\qquad
 \|u\|_{H^k(\tilde\Omega)}^2= \sum_{|\alpha|=0}^k \|D^\alpha u\|_{L^2(\tilde\Omega)}^2,
\end{align*}
where $\alpha\in\NN_0^n$ is a multi-index. We will write $\nabla u$ for the gradient of $u$.
$H^1_0(\tilde\Omega)$ is the closure in $H^1(\tilde\Omega)$ of infinitely often differentiable functions with compact support in $\tilde \Omega$.
We recall the Poincar\'e inequality \cite[6.26]{Adams1975}
\begin{align*}
 \| v\|_{L^2(\tilde \Omega)} \leq \frac{{\rm diam}(\tilde\Omega)}{\sqrt{2}} \|\nabla v\|_{L^2(\tilde \Omega)}\quad\text{for all } v\in H^1_0(\tilde\Omega).
\end{align*}
Hence, $\|\nabla v\|_{L^2(\tilde\Omega)}$ defines an equivalent norm on $H^1_0(\tilde \Omega)$.
By $u_{\mid \tilde\Omega}$ we denote the restriction of a function $u$ to $\tilde\Omega$; if the context is clear, we will write $u$ instead of $u_{\mid \tilde\Omega}$.

In the whole manuscript we let $\eps_0>0$ be such that (i) the projection of the diffuse layer onto $\Gamma$ is well-defined, see \eqref{eq:projection}, and (ii) \eqref{eq:det_half} holds.
Furthermore, we let $0<\eps\leq \eps_0$. 
We denote by $C$ a generic constant which may change from line to line, and, in particular, $C$ is independent of $\eps$, $\delta$ and $h$ and the data.
Under the given assumptions, we have the following existence and regularity result, which follows from a combination of results from standard theory on elliptic equations \cite{GT2001,Grisvard85}.
\begin{lemma}\label{lem:regularity_sharp}
 Let $\Gamma\in C^{1,1}$, $f\in L^2(\Omega)$, $g\in H^2(\Omega)$. Then there exists a unique solution $u\in H^1_0(\Omega)$ to \eqref{eq:model} such that $u_{\mid D_i}\in H^2(D_i)$, $i=1,2$, and
 $$
  \|u\|_{H^1(\Omega)}+\|u_{\mid D_1}\|_{H^2(D_1)}+\|u_{\mid D_2}\|_{H^2(D_2)} \leq C (\|f\|_{L^2(\Omega)} + \|g\|_{H^2(\Omega)}).
 $$
\end{lemma}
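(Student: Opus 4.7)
The plan is to decompose Problem \eqref{eq:model} into two independent Dirichlet problems on $D_1$ and $D_2$, apply the standard theory to each, and then glue the solutions back together into an element of $H^1_0(\Omega)$.

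First, since $\Gamma\subset\Omega$ is a closed manifold of co-dimension one, it is disjoint from $\partial\Omega$, so $\partial D_1=\Gamma$ and $\partial D_2=\Gamma\cup\partial\Omega$. I would set $u_i = g + w_i$ on $D_i$ with $w_1\in H^1_0(D_1)$ and $w_2\in H^1(D_2)$ vanishing on $\Gamma\cup\partial\Omega$, so that $w_i$ solves the homogeneous Dirichlet problem
\[
-\Delta w_i = f+\Delta g \quad\text{in } D_i,\qquad w_i=0 \text{ on } \partial D_i.
\]
Since $\Delta g\in L^2(\Omega)$, the right-hand side lies in $L^2(D_i)$, and Lax--Milgram produces a unique weak solution $w_i\in H^1_0(D_i)$ with the energy bound $\|w_i\|_{H^1(D_i)}\le C(\|f\|_{L^2(D_i)}+\|g\|_{H^2(D_i)})$.

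Next I would upgrade $w_i$ to $H^2(D_i)$ using elliptic regularity. On $D_1$, the boundary $\Gamma$ is $C^{1,1}$, and the classical $H^2$-regularity result for the Dirichlet problem on $C^{1,1}$ domains (see \cite[Thm.~8.12]{GT2001}) yields $w_1\in H^2(D_1)$ with a matching bound. On $D_2$ the boundary splits into the $C^{1,1}$ piece $\Gamma$ (separated from $\partial\Omega$) and the convex polygonal piece $\partial\Omega$. Since $\Omega$ is convex polygonal, the interior angles at the corners of $\partial\Omega$ are all less than $\pi$, so Grisvard's theory \cite[Thm.~3.2.1.2 and Ch.~4]{Grisvard85} applies and gives $w_2\in H^2(D_2)$ together with the expected a priori estimate; combining the two estimates and using $g\in H^2(\Omega)$ provides the bound in the statement.

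Finally, I would glue $u_1$ and $u_2$ into a single function $u$ on $\Omega$. Because both $u_1$ and $u_2$ have trace equal to $g_{|\Gamma}$ on $\Gamma$, the traces match, so $u\in H^1(\Omega)$; since $u_2=0$ on $\partial\Omega$, we obtain $u\in H^1_0(\Omega)$. One checks from the equations on $D_1$ and $D_2$ that $u$ is a weak solution of \eqref{eq:model}. Uniqueness follows by standard means: the difference of two solutions lies in $H^1_0(\Omega)$, vanishes on $\Gamma$, and has zero Laplacian on $\Omega\setminus\Gamma$, hence is zero by testing against itself. The main technical point is the $H^2$-regularity on $D_2$, because its boundary is neither smooth nor entirely polygonal; this is resolved by the fact that $\Gamma$ and $\partial\Omega$ are mutually disjoint (so a partition-of-unity argument localizes the issue) combined with Grisvard's convex-corner estimate.
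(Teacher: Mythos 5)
Your proposal is correct and follows exactly the route the paper itself indicates: it gives no detailed proof of Lemma~\ref{lem:regularity_sharp}, merely noting that the problem splits into independent Dirichlet problems on $D_1$ and $D_2$ to which standard $H^2$-regularity theory from \cite{GT2001} (for the $C^{1,1}$ interface) and \cite{Grisvard85} (for the convex polygonal outer boundary) applies, which is precisely your decomposition, localization, and gluing argument. The only cosmetic point is the citation of \cite[Thm.~8.12]{GT2001}, which is stated for $C^{2}$ boundaries; for $C^{1,1}$ one should rather invoke \cite[Thm.~9.15]{GT2001} with $p=2$ or the corresponding result in \cite{Grisvard85}, but this does not affect the validity of the argument.
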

%
For the solution of the diffuse interface problem we have
\begin{lemma}\label{lem:regularity_diffuse}
 Let $\Gamma\in C^{1,1}$, $f\in L^2(\Omega)$, $g\in H^2(\Omega)$. 
 Then there exists a unique solution $u^\eps\in H^1_0(\Omega)$ to \eqref{eq:Dirichlet_diffuse} such that $u^\eps_{\mid D_i\setminus\Se}\in H^2(D_i\setminus\Se)$, $i=1,2$, and
 $$
  \|u^\eps\|_{H^1(\Omega)}+\|u_{\mid D_1\setminus\Se}^\eps\|_{H^2(D_1\setminus\Se)}+\|u_{\mid D_2\setminus\Se}^\eps\|_{H^2(D_2\setminus\Se)} \leq C (\|f\|_{L^2(\Omega)} + \|g\|_{H^2(\Omega)}).
 $$
\end{lemma}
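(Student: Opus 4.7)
The plan is to follow the structure of Lemma~\ref{lem:regularity_sharp}: first establish well-posedness by a Lax--Milgram argument on a suitable affine subspace of $H^1_0(\Omega)$, and then decompose the problem into two Dirichlet problems on the subdomains $D_i\setminus \Se$, each of which has boundary smooth enough for classical $H^2$-regularity theory to apply.

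For existence and uniqueness, I introduce the closed subspace
\[
 V^\eps = \{v \in H^1_0(\Omega) : v = 0 \text{ a.e.\ in } \Se\}.
\]
Since $\Se$ is compactly contained in $\Omega$ for $\eps\leq\eps_0$, I can pick a cutoff $\eta\in C_c^\infty(\Omega)$ with $\eta\equiv 1$ on $\Se$ and set $\tilde g = \eta g \in H^1_0(\Omega)$, which agrees with $g$ on $\Se$ and satisfies $\|\tilde g\|_{H^2(\Omega)}\leq C\|g\|_{H^2(\Omega)}$. Problem \eqref{eq:Dirichlet_diffuse} is then equivalent to finding $u^\eps \in \tilde g + V^\eps$ with
\[
 \int_\Omega \nabla u^\eps\cdot\nabla v\d x = \int_\Omega f v\d x \quad\text{for all } v\in V^\eps.
\]
Lax--Milgram on $V^\eps$ yields a unique solution, and testing with $u^\eps-\tilde g\in V^\eps$ together with the Poincar\'e inequality gives the $H^1$-bound $\|u^\eps\|_{H^1(\Omega)}\leq C(\|f\|_{L^2(\Omega)}+\|g\|_{H^2(\Omega)})$.

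For the $H^2$-regularity on the pieces $D_i\setminus\Se$, I observe that $\partial\Se$ is the disjoint union of the two level sets $\Gamma_\eps^\pm=\{d_\Gamma=\pm\eps\}$ of the signed distance function. Since $\Gamma\in C^{1,1}$ and $\eps\leq\eps_0$, the function $d_\Gamma$ is $C^{1,1}$ with non-vanishing gradient in the tubular neighborhood, so each $\Gamma_\eps^\pm$ is a $C^{1,1}$-manifold. On $D_1\setminus\Se$ the trace of $u^\eps$ on $\Gamma_\eps^-$ equals $g$; writing $u^\eps=g+w_1$ turns this into a Poisson problem for $w_1\in H^1_0(D_1\setminus\Se)$ with right-hand side $f+\Delta g\in L^2$ on a $C^{1,1}$-domain, and \cite[Thm.~2.4.2.5]{Grisvard85} delivers $H^2$-regularity. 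On $D_2\setminus\Se$ the boundary has two components, the convex polygon $\partial\Omega$ and the $C^{1,1}$-hypersurface $\Gamma_\eps^+$, separated by a distance bounded below by ${\rm dist}(\Gamma,\partial\Omega)-\eps_0>0$. Using a partition of unity that localises $u^\eps$ near each boundary component, the regularity theory for convex polygons (near $\partial\Omega$) and the $C^{1,1}$-boundary theory (near $\Gamma_\eps^+$) combine to give $u^\eps_{\mid D_2\setminus\Se}\in H^2$.

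The main obstacle, as with Theorem~\ref{thm:main1}, is that the constant in the $H^2$-estimate must be independent of $\eps$, whereas the domains $D_i\setminus\Se$ themselves vary with $\eps$. The way around this is the uniform regularity of the level sets: the $C^{1,1}$-norms of local parametrisations of $\Gamma_\eps^\pm$ are controlled by the $C^{1,1}$-norm of $\Gamma$ and by $\eps_0$, uniformly in $\eps\in(0,\eps_0]$, because $\Gamma_\eps^\pm$ are obtained from $\Gamma$ by normal transport along $\nabla d_\Gamma$. Tracking constants through the standard localisation, flattening, and Nirenberg difference-quotient proof of $H^2$-regularity then shows that the resulting bound depends only on these uniform quantities. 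Summing the estimates on $D_1\setminus\Se$ and $D_2\setminus\Se$ with the $H^1$-bound from the first step yields the stated inequality.
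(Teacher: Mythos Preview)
The paper does not give an explicit proof of this lemma; it is stated as a direct consequence of standard elliptic theory with citations to \cite{GT2001,Grisvard85}, in parallel with Lemma~\ref{lem:regularity_sharp}. Your proposal is a correct and careful fleshing-out of exactly that standard route---Lax--Milgram on the constrained space for well-posedness, followed by $H^2$-regularity on $C^{1,1}$ and convex-polygonal subdomains via Grisvard, with the crucial observation that the $C^{1,1}$ constants of the level sets $\{d_\Gamma=\pm\eps\}$ are uniform in $\eps\le\eps_0$---so it matches the paper's intended argument.
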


For later reference, we note that the solutions $u$ to \eqref{eq:model} and $u^\eps$ to \eqref{eq:Dirichlet_diffuse}, satisfy
\begin{align}\label{eq:trace_of_diffuse_u}
 u^\eps-u=0 \quad\text{on } \Gamma.
\end{align}


\section{Interface problems involving $\Se$}\label{sec:continuous_error}
We start with error estimates for the solutions to the model problem \eqref{eq:model} and its diffuse interface reformulation \eqref{eq:Dirichlet_diffuse}.
First, we derive estimates in the $H^1$-seminorm, and then we derive a corresponding $L^2$-error estimate via the Aubin-Nitsche lemma. 
Theorem~\ref{thm:main1} is then a consequence of Theorem~\ref{thm:errorH1} and Theorem~\ref{thm:L2estimate} below.
Since \eqref{eq:model} is a linear equation, we may assume without loss of generality that $g=0$ in this section.
For the sake of simplicity, we let $D=D_1$ and $D^\eps=D\setminus\Se$ in the rest of this section, and perform the analysis for this domain. In particular $\Gamma=\partial D$. The remaining estimates on $D_2$ can be derived analogously.
Furthermore, in slight abuse of notation, we denote by $u \in H^1_0(D)\cap H^2(D)$ the restriction of the solution to \eqref{eq:model} to $D$. 
By $u^\eps\in H^1_0(\De)\cap H^2(\De)$ we denote the corresponding restriction of the solution to \eqref{eq:Dirichlet_diffuse}. 
In particular, there holds
\begin{align}
 -\Delta u &= f \quad \text{in } D,\quad u=0\quad\text{on } \partial D,\label{eq:model1_wlog}\\
 -\Delta u^\eps &= f \quad \text{in } \De,\quad u=0\quad\text{on } \partial \De.\label{eq:model2_wlog}
\end{align}
%
Hence, we have for all $v\in H^1_0(D)$ that 
\begin{align}\label{eq:u_weak}
 \int_{D} \nabla u \cdot\nabla v\d x= \int_{D} f v\d x.
\end{align}
Extending $u^\eps=0$ on $\Gamma_\eps=D\setminus\De$ and
integration by parts shows that for any $v\in H^1_0(D)$
\begin{align}\label{eq:ue_weak}
  \int_D \nabla u^\eps\cdot \nabla v \d x=\int_{\De} \nabla u^\eps\cdot \nabla v\d x
  =\int_{D^\eps} f v\d x +\int_{\partial \De} \partial_n u^\eps v\d\sigma.
\end{align}
Here, $n$ denotes the exterior unit normal field to $\partial \De$, and $\partial_n u^\eps$ denotes the normal derivative of $u^\eps$.
Subtracting \eqref{eq:u_weak} and \eqref{eq:ue_weak}, we obtain
\begin{align}\label{eq:difference}
 \int_{D} \nabla (u-u^\eps) \cdot\nabla v\d x=\int_{\Gamma_\eps} fv\d x -\int_{\partial\De} \partial_n u^\eps v\d \sigma\quad\text{for all }v\in H^1_0(D).
\end{align}

\begin{theorem}\label{thm:errorH1}
 Let $f\in L^2(D)$, and let $u\in H_0^1(D)\cap H^2(D)$ be the solution to \eqref{eq:model1_wlog}, and let $u^\eps\in H^1_0(\De)\cap H^2(\De)$ be the solution to \eqref{eq:model2_wlog}. Then
 \begin{align*}
  \|\nabla u-\nabla u^\eps\|_{L^2(D)}\leq C \sqrt{\eps}  \|f\|_{L^2(D)}.
 \end{align*}
\end{theorem}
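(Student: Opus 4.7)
The plan is to test the error identity \eqref{eq:difference} with the natural choice $v = u - u^\eps \in H^1_0(D)$, which is admissible since $u^\eps$ has been extended by zero to $\Gamma_\eps = D \setminus \De$. Using $u^\eps \equiv 0$ on $\Gamma_\eps$ (and in particular on $\partial \De$), the identity collapses to
\begin{equation*}
\|\nabla(u-u^\eps)\|_{L^2(D)}^2 = \int_{\Gamma_\eps} f\,u \d x - \int_{\partial \De} \partial_n u^\eps \cdot u \d \sigma.
\end{equation*}
The task is then to show that both terms on the right-hand side are $O(\eps)\|f\|_{L^2(D)}^2$, from which the claimed $\sqrt{\eps}$-rate follows.

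For the volume term I would apply Cauchy--Schwarz together with a one-dimensional Poincaré inequality in the normal coordinate of the tubular strip $\Gamma_\eps$. Since $\Gamma_\eps$ has width $\eps$ and $u$ vanishes on its outer boundary $\Gamma$, this yields $\|u\|_{L^2(\Gamma_\eps)} \leq C\eps\|\nabla u\|_{L^2(\Gamma_\eps)} \leq C\eps\|f\|_{L^2(D)}$ by Lemma~\ref{lem:regularity_sharp}, and hence the volume contribution is $O(\eps)\|f\|_{L^2(D)}^2$ with room to spare.

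The boundary term is the delicate piece. Cauchy--Schwarz reduces matters to controlling $\|u\|_{L^2(\partial \De)}$ and $\|\partial_n u^\eps\|_{L^2(\partial \De)}$. For the second factor I would invoke the standard trace theorem to obtain $\|\partial_n u^\eps\|_{L^2(\partial \De)} \leq C\|u^\eps\|_{H^2(\De)} \leq C\|f\|_{L^2(\Omega)}$ via Lemma~\ref{lem:regularity_diffuse}, with a trace constant uniform in $\eps$ because $\partial \De$ is, for small $\eps$, a $C^{1,1}$-parallel surface to $\Gamma$ with uniformly bounded second fundamental form. The first factor is where care is needed: a naive estimate $\|u\|_{L^2(\partial \De)} \leq C\sqrt{\eps}\|\nabla u\|_{L^2(D)}$ would yield only the suboptimal rate $\eps^{1/4}$. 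The fix is to exploit both $u|_\Gamma = 0$ \emph{and} $u \in H^2(D)$: in normal coordinates $(\sigma,s)$ near $\Gamma$, two applications of the fundamental theorem of calculus give
\begin{equation*}
u(\sigma,\eps) = \eps\,\partial_\nu u(\sigma,0) + \int_0^\eps (\eps-\tau)\,\partial_s^2 u(\sigma,\tau) \d\tau,
\end{equation*}
so that squaring, integrating over $\Gamma$, and using the trace estimate $\|\partial_\nu u\|_{L^2(\Gamma)} \leq C\|u\|_{H^2(D)}$ produces $\|u\|_{L^2(\partial \De)} \leq C\eps\|u\|_{H^2(D)} \leq C\eps\|f\|_{L^2(D)}$. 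The boundary contribution is therefore also $O(\eps)\|f\|_{L^2(D)}^2$.

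Combining both bounds gives $\|\nabla(u-u^\eps)\|_{L^2(D)}^2 \leq C\eps\|f\|_{L^2(D)}^2$, which is the desired estimate. The principal obstacle is precisely the boundary term: extracting a full $\eps$ (rather than $\sqrt{\eps}$) from $\|u\|_{L^2(\partial \De)}$ is what forces one to use both the $H^2$-regularity of $u$ and its vanishing trace on $\Gamma$; such tubular-neighborhood trace estimates are presumably among the auxiliary results collected in the appendix advertised in the introduction.
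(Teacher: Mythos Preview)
Your argument is correct and arrives at the claimed estimate, but the route you take for the boundary term differs from the paper's. The paper keeps the test function written as $u-u^\eps$ throughout and estimates
\[
\|u-u^\eps\|_{L^2(\partial\De)}\le C\sqrt{\eps}\,\|\nabla(u-u^\eps)\|_{L^2(\Gamma_\eps)}
\]
via Lemma~\ref{lem:error_perturbed_interface} (using only that $(u-u^\eps)|_\Gamma=0$), and then \emph{absorbs} the resulting factor $\|\nabla(u-u^\eps)\|$ into the left-hand side of the energy identity; no $H^2$-regularity of $u$ enters that step. You instead replace $u-u^\eps$ by $u$ on $\Gamma_\eps$ and $\partial\De$ (legitimate, since $u^\eps$ is extended by zero) and extract a full factor $\eps$ from $\|u\|_{L^2(\partial\De)}$ via a second-order Taylor expansion in the normal direction, which requires $u\in H^2(D)$ and the trace bound $\|\partial_\nu u\|_{L^2(\Gamma)}\le C\|u\|_{H^2(D)}$. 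Your version yields a closed inequality $\|\nabla(u-u^\eps)\|_{L^2(D)}^2\le C\eps\|f\|_{L^2(D)}^2$ without any absorption step; the paper's version is more economical in that the boundary estimate uses only $H^1$-information on the error and a $\sqrt{\eps}$ gain from Lemma~\ref{lem:error_perturbed_interface}, letting the self-referential structure of the energy identity do the rest. Both approaches draw on the same appendix tools (Theorem~\ref{thm:estimate} for the strip, Lemma~\ref{lem:trace_unweighted} for $\partial_n u^\eps$), and both are clean; the paper's absorption trick has the mild advantage of not needing an explicit second-order expansion, while yours has the mild advantage of producing the final bound in one stroke.
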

\begin{proof}
Using \eqref{eq:trace_of_diffuse_u}, we see that $u-u^\eps \in H^1_0(D)$ is a valid test function for \eqref{eq:difference}, i.e. 
\begin{align}\label{eq:errH1}
 \|\nabla u-\nabla u^\eps\|_{L^2(D)}^2=\int_{\Gamma_\eps} f(u-u^\eps)\d x -\int_{\partial\De} \partial_n u^\eps (u-u^\eps)\d \sigma.
\end{align}
 Using Theorem~\ref{thm:estimate} and \eqref{eq:trace_of_diffuse_u}, we obtain
 \begin{align*}
  \int_{\Gamma_\eps} f (u-u^\eps) \d x\leq  C\eps\|f\|_{L^2(\Gamma_\eps)} \|\nabla u-\nabla u^\eps\|_{L^2(D)}.
 \end{align*}
 Furthermore, an application of the Cauchy-Schwarz inequality yields
 \begin{align*}
  \int_{\partial\De} \partial_n u^\eps (u-u^\eps)\d \sigma &\leq \| \partial_n u^\eps\|_{L^2(\partial\De)} \|u-u^\eps\|_{L^2(\partial\De)}.
 \end{align*}
 The term $\| \partial_n u^\eps\|_{L^2(\partial\De)}$ can be estimated in terms of $\|f\|_{L^2(D)}$ using Lemma~\ref{lem:trace_unweighted} and Lemma~\ref{lem:regularity_diffuse}.
 For the other term, we employ \eqref{eq:trace_of_diffuse_u} and Lemma~\ref{lem:error_perturbed_interface}, which gives
 \begin{align*}
   \|u-u^\eps\|_{L^2(\partial\De)}\leq C \sqrt{\eps} \|\nabla u-\nabla u^\eps\|_{L^2(D)}.
 \end{align*}
 Using these estimates in \eqref{eq:errH1} completes the proof.
\end{proof}
%
Using the Aubin-Nitsche lemma \cite{Ciarlet2002}, we obtain
\begin{align}\label{eq:best_approx_dual}
 \|u-u^\eps\|_{L^2(D)}^2 \leq C \|\nabla u-\nabla u^\eps\|_{L^2(D)} \inf_{v\in H^1_0(D^\eps)} \|\nabla z-\nabla v\|_{L^2(D)},
\end{align}
where $z\in H^1_0(D)\cap H^2(D)$  denotes the solution to
\begin{align*}
 -\Delta z = u-u^\eps\quad\text{in }D,
\end{align*}
which exists by Lemma~\ref{lem:regularity_sharp}. 
Using $v=z^\eps\in H^1_0(D^\eps)\cap H^2(D^\eps)$, defined as the solution to
\begin{align*}
 -\Delta z^\eps = u-u^\eps\quad\text{in }D^\eps,
\end{align*}
in \eqref{eq:best_approx_dual}, 
we obtain as a direct consequence of Theorem~\ref{thm:errorH1} the following statement.
%
\begin{theorem}\label{thm:L2estimate}
 Let the assumptions of Theorem~\ref{thm:errorH1} hold true. Then
 \begin{align*}
  \|u-u^\eps\|_{L^2(D)} \leq C \eps \|f\|_{L^2(D)}.
 \end{align*}
\end{theorem}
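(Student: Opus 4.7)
The plan is to use the duality setup that has already been displayed: Aubin--Nitsche reduces the $L^2$-error to a product of two $H^1$-type quantities, namely $\|\nabla u - \nabla u^\eps\|_{L^2(D)}$ and $\|\nabla z - \nabla z^\eps\|_{L^2(D)}$, where the second factor arises from choosing $v = z^\eps$ as the admissible test function in the infimum on the right-hand side of \eqref{eq:best_approx_dual}. The first factor is already bounded by $C\sqrt{\eps}\|f\|_{L^2(D)}$ via Theorem~\ref{thm:errorH1}. So the entire task reduces to estimating the second factor.

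The key observation is that the pair $(z, z^\eps)$ satisfies exactly the same pair of boundary value problems \eqref{eq:model1_wlog}--\eqref{eq:model2_wlog} that $(u, u^\eps)$ does, only with right-hand side $u - u^\eps \in L^2(D)$ in place of $f$. Consequently Theorem~\ref{thm:errorH1} applies verbatim to the dual pair and yields
\begin{align*}
 \|\nabla z - \nabla z^\eps\|_{L^2(D)} \leq C \sqrt{\eps}\, \|u - u^\eps\|_{L^2(D)}.
\end{align*}
Inserting this bound together with the bound from Theorem~\ref{thm:errorH1} into \eqref{eq:best_approx_dual} gives
\begin{align*}
 \|u-u^\eps\|_{L^2(D)}^2 \leq C\,\sqrt{\eps}\,\|f\|_{L^2(D)} \cdot \sqrt{\eps}\,\|u-u^\eps\|_{L^2(D)},
\end{align*}
and dividing by $\|u-u^\eps\|_{L^2(D)}$ (or handling the trivial case where it vanishes separately) yields the claimed estimate.

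There is essentially no hidden difficulty here; the statement follows by combining the Aubin--Nitsche inequality already set up in the excerpt with a single additional invocation of Theorem~\ref{thm:errorH1}. The only thing that one needs to verify carefully is that $z^\eps$ is indeed admissible in the infimum, i.e. that $z^\eps \in H^1_0(D^\eps)$ when extended by zero on $\Gamma_\eps = D \setminus D^\eps$ lies in $H^1_0(D)$, and that applying Theorem~\ref{thm:errorH1} to the dual right-hand side is justified because $u - u^\eps \in L^2(D)$ with $\|u - u^\eps\|_{L^2(D)}$ finite (which is immediate from Lemmas~\ref{lem:regularity_sharp} and \ref{lem:regularity_diffuse}). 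Both are routine, so the proof is short.
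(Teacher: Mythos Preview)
Your proposal is correct and follows essentially the same route as the paper: apply Aubin--Nitsche via \eqref{eq:best_approx_dual}, choose $v=z^\eps$ in the infimum, and invoke Theorem~\ref{thm:errorH1} once for the primal pair $(u,u^\eps)$ and once for the dual pair $(z,z^\eps)$ with right-hand side $u-u^\eps$. The paper presents this argument in the paragraphs surrounding \eqref{eq:best_approx_dual} rather than as a separate proof, but the content is identical.
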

%
%
\section{Interface problem involving $\Se_h$}\label{sec:interface}
As a first step towards a practical numerical scheme, we let $\T_h$ be a shape regular triangulation of $\Omega$, cf. \cite[Definition 4.4.13]{BrennerScott}.
Replacing the domain $\Omega\setminus\Se$ by $\Omega\setminus\Se_h$, where $\Se_h$ is defined in \eqref{eq:Seh}, yields an important change in the geometry.
Firstly, the distance of points in $\partial \Se_h$ to $\Gamma$ is not constant anymore, whereas $\partial\Se=\{|d_\Gamma|=\eps\}$, and
secondly, for fixed $\T_h$, $\partial\Se_h$ is only polygonal. 
Therefore, we cannot rely on $H^2$-regularity of the dual problem to \eqref{eq:dirichlet_diffuse_h}.
%
As in the previous section, we consider only the case $g=0$, $D=D_1$ and $\Deh = D_1\setminus \Se_h$ in detail; the remaining cases follow with similar arguments. 
Similar to the previous section, we denote by $u^{\eps+\delta} \in H^1_0(D^{\eps+\delta})\cap H^2(D^{\eps+\delta})$ the solution to \eqref{eq:model2_wlog} with $D^{\eps}$ replaced by $D^{\eps+\delta}$,
where $\delta=\max\{{\rm diam}(T): T\cap \Se\neq \emptyset, T\in\T_h\}$.
We consider \eqref{eq:dirichlet_diffuse_h} in weak form, i.e. let $\ueh\in H^1_0(\Deh)$ be the solution to
\begin{align}\label{eq:ueh_weak}
 \int_{\Deh} \nabla\ueh \cdot\nabla v\d x = \int_{\Deh} fv\d x\quad\text{for all } v\in H^1_0(\Deh).
\end{align}
Since $\Deh$ is a non-convex polygonal domain in general, the solution $\ueh$ to \eqref{eq:ueh_weak} is not regular enough to repeat the arguments of the previous section. Our error estimate is based on the following observation
\begin{align}\label{eq:inclusion}
 D^{\eps+\delta} \subset \Deh \subset \De\subset D.
\end{align}
\begin{theorem}\label{thm:errorH1_h}
 Let $f\in L^2(D)$, and let $u^{\eps}\in H^1_0(D)\cap H^2(D)$ be the solution to \eqref{eq:model2_wlog}, and let $\ueh\in H^1_0(\Deh)$ be the solution to \eqref{eq:ueh_weak}. Then there holds
 \begin{align*}
  \| \nabla u^\eps- \nabla \ueh\|_{L^2(D)}\leq C \sqrt{\delta}  \|f\|_{L^2(D)}
 \end{align*}
 with $\delta=\max\{{\rm diam}(T): T\cap \partial\Se\neq \emptyset,\ T\in \T_h\}$.
\end{theorem}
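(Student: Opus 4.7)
The plan is to reduce the theorem to a comparison between two \emph{smooth} diffuse solutions, namely $u^\eps$ and $u^{\eps+\delta}$, via a Cea-type best-approximation argument. This way I can avoid any direct use of (generally unavailable) $H^2$-regularity of $\ueh$ on the polygonal, possibly non-convex domain $\Deh$.

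First, extend both $u^\eps$ and $\ueh$ by zero to $D$; then $u^\eps\in H^1_0(\De)$ and $\ueh\in H^1_0(\Deh)$, and both functions vanish outside $\De$, so $\|\nabla(u^\eps-\ueh)\|_{L^2(D)}=\|\nabla(u^\eps-\ueh)\|_{L^2(\De)}$. Using $u^\eps\in H^2(\De)$ from Lemma~\ref{lem:regularity_diffuse} and integrating by parts on $\Deh$, one checks that $\int_D \nabla u^\eps\cdot\nabla v\,\d x=\int_D f v\,\d x$ for every $v\in H^1_0(\Deh)$ (the boundary term on $\partial\Deh$ vanishes since $v=0$ there). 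Combined with the weak form of $\ueh$, this yields by a standard energy identity the Cea-type equality
\[
\|\nabla(u^\eps-\ueh)\|_{L^2(D)}
=\min_{v\in H^1_0(\Deh)}\|\nabla(u^\eps-v)\|_{L^2(D)}.
\]
By the inclusion \eqref{eq:inclusion}, the zero extension of $u^{\eps+\delta}$ lies in $H^1_0(D^{\eps+\delta})\subset H^1_0(\Deh)$, so taking it as test function reduces the proof to
\[
\|\nabla(u^\eps-u^{\eps+\delta})\|_{L^2(D)}\le C\sqrt{\delta}\,\|f\|_{L^2(D)}.
\]

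To establish this last bound I would simply repeat the proof of Theorem~\ref{thm:errorH1}, with $(D,\Gamma,\eps)$ replaced by $(\De,\partial\De,\delta)$ and $(u,u^\eps)$ replaced by $(u^\eps,u^{\eps+\delta})$. With $w=u^\eps-u^{\eps+\delta}\in H^1_0(\De)$ (note that $w=0$ on $\partial\De$, because $u^\eps=0$ there and the zero extension of $u^{\eps+\delta}$ also vanishes on $\partial\De\subset\De\setminus D^{\eps+\delta}$), integration by parts gives
\[
\|\nabla w\|_{L^2(\De)}^{2}
=\int_{\De\setminus D^{\eps+\delta}} f\,w\,\d x-\int_{\partial D^{\eps+\delta}}\partial_n u^{\eps+\delta}\,w\,\d\sigma,
\]
and the two terms are controlled exactly as in the proof of Theorem~\ref{thm:errorH1}, invoking the analogs of Theorem~\ref{thm:estimate}, Lemma~\ref{lem:trace_unweighted} and Lemma~\ref{lem:error_perturbed_interface} applied to the $\delta$-tubular layer $\De\setminus D^{\eps+\delta}$ around the parallel surface $\partial\De$. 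Dividing by $\|\nabla w\|_{L^2(\De)}$ produces the claimed $\sqrt{\delta}$-bound.

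The main obstacle is the verification that the appendix estimates—originally formulated for the $\eps$-tubular neighborhood of the interface $\Gamma$—remain valid, with the same structure, for a $\delta$-tubular neighborhood of the parallel surface $\partial\De$. Since $\partial\De$ inherits the $C^{1,1}$-regularity of $\Gamma$ for $\eps\le\eps_0$ and the proofs in the appendix rely only on tubular coordinates and the fundamental theorem of calculus, this transfer is essentially mechanical, but it is the one technical step that really needs to be checked before the two displays above can be combined to conclude.
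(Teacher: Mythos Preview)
Your proposal is correct and follows essentially the same route as the paper: the paper likewise invokes Ce\'a's lemma (using $H^1_0(\Deh)\subset H^1_0(\De)$), chooses $u^{\eps+\delta}$ as the comparison function via the inclusion \eqref{eq:inclusion}, and then appeals to Theorem~\ref{thm:errorH1} with $D$ replaced by $\De$ to obtain the $\sqrt{\delta}$ bound. Your write-up is simply more explicit about the Galerkin orthogonality and about why the appendix estimates transfer to the parallel surface $\partial\De$, a point the paper leaves implicit.
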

\begin{proof}
 In view of \eqref{eq:inclusion}, we have that $H^1_0(\Deh)$ is closed in $H^1_0(\De)$, where functions in $H^1_0(\Deh)$ are extended by zero.
 Using Ce\'a's Lemma \cite{BrennerScott} 
 and $u^{\eps+\delta}\in H^1_0(\Deh)$ we obtain
\begin{align*}
 \|\nabla u^\eps-\nabla \ueh\|_{L^2(D)}&\leq \inf_{v\in H^1_0(\Deh)} \|\nabla u^\eps-\nabla v\|_{L^2(D)} \leq \| \nabla u^\eps-\nabla u^{\eps+\delta}\|_{L^2(D)}
 \leq C \sqrt{\delta} \|f\|_{L^2(D)},
\end{align*}
where we also used Theorem~\ref{thm:errorH1} with $D$ replaced by $\De$ and $u$ replaced by $u^\eps$.
\end{proof}


An application of the Aubin-Nitsche lemma \cite{Ciarlet2002} yields
\begin{align}\label{eq:best_approx_dual2}
 \|u^\eps-\ueh\|_{L^2(D)}^2 \leq C \|\nabla u^\eps-\nabla \ueh\|_{L^2(D)} \inf_{v\in H^1_0(\Deh)} \|\nabla z^{\eps,h}-\nabla v\|_{L^2(D)},
\end{align}
where $z^{\eps,h}\in H^1_0(\De)\cap H^2(\De)$ is the unique solution to
\begin{align*}
 -\Delta z^{\eps,h}= u^\eps-\ueh\quad\text{in } \De.
\end{align*}
The first term on the right-hand side of \eqref{eq:best_approx_dual2} can be estimated using Theorem~\ref{thm:errorH1_h}.
The second term can be estimated using a similar reasoning as in the proof of Theorem~\ref{thm:errorH1_h}.
Summarizing, we have the following statement.

\begin{theorem}\label{thm:L2estimate_h}
 Let the assumptions of Theorem~\ref{thm:errorH1_h} hold true. Then there holds
 \begin{align*}
  \|u^\eps-\ueh\|_{L^2(D)} \leq C \delta \|f\|_{L^2(D)}
 \end{align*}
 with $\delta$ as in Theorem~\ref{thm:errorH1_h}.
\end{theorem}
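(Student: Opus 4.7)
The proof starts from the duality inequality \eqref{eq:best_approx_dual2}, which has already been set up in the text, so the only work left is to bound the two factors on its right-hand side. The first factor $\|\nabla u^\eps - \nabla \ueh\|_{L^2(D)}$ is $O(\sqrt{\delta}\|f\|_{L^2(D)})$ by Theorem~\ref{thm:errorH1_h}. Hence the plan reduces to proving that
\[
 \inf_{v\in H^1_0(\Deh)} \|\nabla z^{\eps,h}-\nabla v\|_{L^2(D)} \;\leq\; C\sqrt{\delta}\, \|u^\eps-\ueh\|_{L^2(D)},
\]
because then \eqref{eq:best_approx_dual2} gives $\|u^\eps-\ueh\|_{L^2(D)}^2 \leq C\delta\|f\|_{L^2(D)}\,\|u^\eps-\ueh\|_{L^2(D)}$, and dividing yields the claim.

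The construction of an admissible competitor $v$ mirrors the proof of Theorem~\ref{thm:errorH1_h}. I would introduce the ``shrunk'' dual solution $z^{\eps+\delta,h}\in H^1_0(D^{\eps+\delta})\cap H^2(D^{\eps+\delta})$ defined by $-\Delta z^{\eps+\delta,h}=u^\eps-\ueh$ in $D^{\eps+\delta}$, which exists by Lemma~\ref{lem:regularity_sharp} (the boundary $\partial D^{\eps+\delta}$ is $C^{1,1}$ as a level set of the signed distance function). Using the inclusion \eqref{eq:inclusion}, the extension of $z^{\eps+\delta,h}$ by zero lies in $H^1_0(\Deh)$, so it is a valid test function for the infimum.

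The core step is then a second application of the continuous $H^1$-error estimate, Theorem~\ref{thm:errorH1}, but applied to the pair $(z^{\eps,h}, z^{\eps+\delta,h})$ instead of $(u,u^\eps)$. Here the ``outer'' domain is $\De$, the ``inner'' domain is $D^{\eps+\delta}=\De\setminus(\mathcal{S}^{\eps+\delta}\setminus \Se)$, and the tubular-layer thickness is $\delta$ rather than $\eps$. Since Theorem~\ref{thm:errorH1} was proven on a generic smooth domain and the right-hand side of the dual problem is $u^\eps-\ueh\in L^2(\De)$, the theorem (with $\eps$ replaced by $\delta$) yields
\[
 \|\nabla z^{\eps,h}-\nabla z^{\eps+\delta,h}\|_{L^2(\De)} \;\leq\; C\sqrt{\delta}\,\|u^\eps-\ueh\|_{L^2(\De)} \;\leq\; C\sqrt{\delta}\,\|u^\eps-\ueh\|_{L^2(D)}.
\]
Combining this with the duality bound and Theorem~\ref{thm:errorH1_h} closes the argument.

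The main obstacle is conceptual rather than computational: because $\Deh$ is polygonal and non-convex, the dual problem cannot be posed on $\Deh$ if one wants $H^2$-regularity, which is why the dual is posed on the smooth $\De$ and why the competitor must be chosen on the further-shrunk smooth domain $D^{\eps+\delta}$. Once one recognises that the same interface-perturbation estimate (Theorem~\ref{thm:errorH1}) can be reapplied to the pair of dual problems with layer width $\delta$, the estimate falls out immediately; one must only be careful to verify that the hypotheses of Theorem~\ref{thm:errorH1} truly apply (smoothness of $\partial\De$, and $L^2$ right-hand side), and that extensions by zero remain in the correct $H^1_0$-spaces thanks to \eqref{eq:inclusion}.
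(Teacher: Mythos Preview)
Your proposal is correct and follows essentially the same route as the paper: the paper states that the best-approximation term in \eqref{eq:best_approx_dual2} ``can be estimated using a similar reasoning as in the proof of Theorem~\ref{thm:errorH1_h}'', which is precisely your choice of the shrunk dual solution $z^{\eps+\delta,h}\in H^1_0(D^{\eps+\delta})\subset H^1_0(\Deh)$ as competitor and the subsequent application of Theorem~\ref{thm:errorH1} with layer width $\delta$. Your identification of the conceptual point---that the dual problem must be posed on the smooth domain $\De$ rather than on $\Deh$ to ensure $H^2$-regularity---is exactly the reason the paper sets up \eqref{eq:best_approx_dual2} the way it does.
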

Theorem~\ref{thm:main2} is now a direct consequence of Theorem~\ref{thm:errorH1_h} and Theorem~\ref{thm:L2estimate_h} in combination with Theorem~\ref{thm:errorH1} and Theorem~\ref{thm:L2estimate}, respectively.
Complementing the result in \cite{FranzGaertnerRoosVoigt2012}, we also give an $L^\infty$-error estimate; see  \cite{Thomee1973}.
\begin{theorem}\label{thm:errorlinfty_h}
 Let the assumptions of Theorem~\ref{thm:errorH1_h} hold true, but let additionally $f\in L^p(D)$ for some $p>n$. Then there holds
 \begin{align*}
  \|u^\eps-\ueh\|_{L^\infty(\Deh)} \leq C \delta \|f\|_{L^p(D)}
 \end{align*}
 with $\delta$ as in Theorem~\ref{thm:errorH1_h}. For the solution $u\in H^1_0(D)\cap H^2(D)$ of \eqref{eq:model1_wlog}, there holds
  \begin{align*}
  \|u^\eps-\ueh\|_{L^\infty(\Deh)} \leq C (\eps+\delta) \|f\|_{L^p(D)}.
 \end{align*}
\end{theorem}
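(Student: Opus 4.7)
The plan is as follows. Since $u^\eps$ and $\ueh$ both solve $-\Delta v=f$ in $\Deh\subset\De$ by \eqref{eq:inclusion}, and $\ueh=0$ on $\partial\Deh$, the function $u^\eps-\ueh$ is harmonic in $\Deh$, and the weak maximum principle gives
\begin{equation*}
 \|u^\eps-\ueh\|_{L^\infty(\Deh)}\leq\|u^\eps\|_{L^\infty(\partial\Deh)}.
\end{equation*}
This reduces the problem to bounding $u^\eps$ on $\partial\Deh$, which I would achieve by combining elliptic regularity of $u^\eps$ up to $\partial\De$ with the geometric fact that every point of $\partial\Deh\setminus\partial D$ lies within distance $\delta$ of $\partial\De$.

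For the regularity part, since $\Gamma\in C^{1,1}$ and $\eps\leq\eps_0$, the parallel surface $\partial\De=\{d_\Gamma=-\eps\}$ is uniformly $C^{1,1}$ in $\eps$. Calder\'on-Zygmund theory then yields $u^\eps\in W^{2,p}(\De)$ with norm bounded by $C\|f\|_{L^p(D)}$ uniformly in $\eps$, and the Sobolev embedding $W^{2,p}\hookrightarrow C^1$ for $p>n$ gives $\|\nabla u^\eps\|_{L^\infty(\De)}\leq C\|f\|_{L^p(D)}$. Combined with $u^\eps=0$ on $\partial\De$ and the uniform interior-ball condition for $\partial\De$, this provides the pointwise estimate $|u^\eps(x)|\leq C\|f\|_{L^p(D)}\,{\rm dist}(x,\partial\De)$ for $x\in\De$. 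For the geometric part, any $x\in\partial\Deh\setminus\partial D$ lies on a face of some $T\in\T_h$ sharing its boundary with an adjacent element $T'$ satisfying $T\cap\Se\neq\emptyset$ and $T'\cap\Se=\emptyset$; hence $T$ intersects both $\Se$ and its complement, so $T\cap\partial\Se\neq\emptyset$, and therefore ${\rm dist}(x,\partial\De)\leq{\rm diam}(T)\leq\delta$. Since additionally $u^\eps\equiv 0$ on $\partial D$, the first claim follows.

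The second inequality follows by applying the same maximum-principle argument to $u-\ueh$, which is harmonic in $\Deh\subset D$. The task is reduced to bounding $\|u\|_{L^\infty(\partial\Deh)}$, and $u\in W^{2,p}(D)\hookrightarrow C^1(\overline D)$ together with $u=0$ on $\Gamma$ yields $|u(x)|\leq C\|f\|_{L^p(D)}\,{\rm dist}(x,\Gamma)$. For $x\in\partial\Deh\setminus\partial D$ the inclusion \eqref{eq:inclusion} gives $\eps\leq|d_\Gamma(x)|\leq\eps+\delta$, so ${\rm dist}(x,\Gamma)\leq\eps+\delta$, which closes the argument.

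The main obstacle is ensuring that the $W^{2,p}(\De)$-constants remain bounded as $\eps\to 0$. This holds because for $\eps\leq\eps_0$ the signed distance function $d_\Gamma$ is $C^{1,1}$ in an $\eps_0$-tubular neighborhood of $\Gamma$, so the level sets $\{d_\Gamma=\pm\eps\}$ inherit a uniform $C^{1,1}$-bound from $\Gamma$. A secondary subtlety, relevant when the argument is transferred to $D=D_2$, is that the boundary then additionally contains the convex polygonal part $\partial\Omega$; convexity of $\partial\Omega$ supplies the required $W^{2,p}$-regularity for $p>n$ via \cite{Grisvard85}.
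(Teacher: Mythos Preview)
Your argument is correct and follows essentially the same route as the paper: the difference is harmonic on $\Deh$, the weak maximum principle reduces the estimate to the boundary values, and $W^{2,p}$-regularity together with the embedding into $C^1$ turns this into a distance estimate to the surface where the relevant function vanishes. The paper phrases the last step via the explicit normal projection $p_\Gamma$ and the fundamental theorem of calculus along the normal, writing $|u(\bar x)|\le (\eps+\delta)\|\partial_n u\|_{L^\infty}$, while you phrase it as $|u^\eps(x)|\le C\|f\|_{L^p}\,{\rm dist}(x,\partial\De)$; these are the same idea. Your additional remarks on the $\eps$-uniformity of the $W^{2,p}(\De)$-constants and on the $D_2$ case go a bit beyond what the paper spells out, though the claim that convexity of $\partial\Omega$ alone yields $W^{2,p}$-regularity for all $p>n$ is not quite right in general for polygons (the admissible $p$ depends on the largest interior angle); the paper does not address this either, treating only $D=D_1$ in detail.
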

\begin{proof}
 We proceed as in \cite{Thomee1973}.
 We observe that $w=u-\ueh \in H^1(\Deh)$ is the weak solution to 
 \begin{align*}
  -\Delta w = 0\quad\text{in } \Deh,\quad w=u \quad\text{on } \partial \Deh.
 \end{align*}
 Using the weak maximum principle \cite[Theorem~8.1]{GT2001}, we see that
 \begin{align*}
  \|u-\ueh\|_{L^\infty(\Deh)} \leq \|u\|_{L^\infty(\partial \Deh)}.
 \end{align*}
 In view of \eqref{eq:projection} below, every $\bar x\in \partial \Deh$ can be written uniquely as $\bar x = x+d_\Gamma(\bar x) n(x)$, where $x=p_\Gamma(\bar x)\in\partial D$ and $n(x)=\nabla d_\Gamma(x)$ denotes the exterior unit normal to $\partial D$.
 Since $f\in L^p(D)$, we have $u\in W^{2,p}(D)$ \cite{GT2001} and $u\in W^{1,\infty}(D)$ by embedding \cite{Adams1975}.
 Since $u=0$ on $\partial D$, we have 
 \begin{align*}
  |u(\bar x)|&= |\int_0^{d_\Gamma(\bar x)} \partial_n u(x+tn(x)) \d t| \leq (\eps+\delta) \|\partial_n u\|_{L^\infty(D\setminus D^{\eps+\delta})}\leq C (\eps+\delta) \|u\|_{W^{2,p}(D)},
 \end{align*}
 where we used $|d_\Gamma(\bar x)|\leq \eps+\delta$. The other assertion is shown similarly.
\end{proof}

\section{Galerkin approximations}\label{sec:discrete_error}
The error analysis of the Galerkin method defined in \eqref{eq:Galerkin} can be divided in two parts, i.e., similar to the previous sections, it is sufficient to show error estimates in $D_1$ and in $D_2$ separately.
In the following we will again only consider the case $D=D_1$ and $\Deh = D_1\setminus \Se_h$, while the case $D=D_2$ can be treated similarly.
As in the introduction, we let $\Vh=P_1(\T_h)\cap H^1_0(\Omega)$ be the finite element space of continuous piecewise affine functions associated to $\T_h$.
Furthermore, we denote by $I_h:C^0(\overline{\Omega})\to P_1(\T_h)\cap H^1(\Omega)$ the nodal interpolation operator. 
By construction of $\Deh$, we see that $\{T\in \T_h: T\cap \Deh\neq \emptyset\}$ is a shape regular triangulation of $\Deh$.
The corresponding finite element space is obtained by restriction, i.e. $\Wh=\{v_{h\mid \Deh}: v_h\in \Vh\}$.
Since $g\in C^0(\overline{\Omega})$, we can define
\begin{align}
 \Wehg=\{v_h \in \Wh: v_{h}=I_h g \text{ on } \partial\Deh\}.
\end{align}
Compatible with \eqref{eq:Galerkin}, we define $u^\eps_h\in \Wehg$ by
\begin{align}
 \int_{\Deh}\nabla u^\eps_h \cdot\nabla v_h \d x = \int_{\Deh} f v_h \d x\quad\text{for all } v_h\in \Wehn,\label{eq:Galerkin2}
\end{align}
and set $u^\eps_h=I_h g$ on $D\setminus\Deh$.
Furthermore, we let
$\kappa=\max\{ {\rm diam}(T):T\cap\partial\mathcal{S}^{\eps+\delta}\neq\emptyset,\ T\in \T_h\}$ and $h=\max\{{\rm diam}(T):\ T\in \T_h\}$.
We then have the following statement.
\begin{theorem}\label{thm:error_H1_Galerkin}
 Let $\ueh\in H^1(\Deh)$ be the solution to \eqref{eq:ueh_weak} such that $\ueh=g$ on $\partial\Deh$, and let $u_h^\eps \in \Wehg$ be the solution to \eqref{eq:Galerkin2}.
 Then 
 \begin{align*}
  \| \ueh-u_h^\eps\|_{H^1(D)} \leq C (\sqrt{\delta}+\kappa^{\frac{2}{3}}+h)(\|f\|_{L^2(D)} + \|g\|_{H^2(D)}).
\end{align*}
\end{theorem}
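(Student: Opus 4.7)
My plan is to follow the pattern of Theorem~\ref{thm:errorH1_h}, now at the discrete level, by combining a Ce\'a-type best-approximation estimate with the intermediate-solution trick. First I would reduce to the case $g=0$. Setting $\widehat u = \ueh - g$ and $\widehat u_h = u_h^\eps - I_h g$, both vanish on $\partial\Deh$, and Strang's second lemma applied to the variational crime $g\mapsto I_h g$, together with the standard bound $\|g - I_h g\|_{H^1(D)}\leq Ch\|g\|_{H^2(D)}$, reduces the claim to estimating $\|\nabla(\widehat u - \widehat u_h)\|_{L^2(\Deh)}$, where $\widehat u\in H^1_0(\Deh)$ solves a homogeneous Dirichlet problem with right-hand side $f+\Delta g\in L^2(\Deh)$ (of norm $\leq \|f\|_{L^2(D)}+C\|g\|_{H^2(D)}$) and $\widehat u_h\in\Wehn$ is its Galerkin approximation. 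Ce\'a's lemma gives
\[
 \|\nabla(\widehat u - \widehat u_h)\|_{L^2(\Deh)} \leq \inf_{v_h\in\Wehn}\|\nabla(\widehat u - v_h)\|_{L^2(\Deh)}.
\]

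To exploit smoothness, I would insert the intermediate $u^{\eps+\delta}\in H^1_0(D^{\eps+\delta})\cap H^2(D^{\eps+\delta})$, the solution to the same Poisson problem but on the smaller smooth domain $D^{\eps+\delta}$; by the inclusion $D^{\eps+\delta}\subset\Deh$ its extension by zero to $\Deh$ lies in $H^1_0(\Deh)$. The triangle inequality
\[
 \|\nabla(\widehat u - v_h)\|_{L^2(\Deh)} \leq \|\nabla(\widehat u - u^{\eps+\delta})\|_{L^2(\Deh)} + \|\nabla(u^{\eps+\delta} - v_h)\|_{L^2(\Deh)}
\]
splits the best approximation into a purely geometric term and a pure interpolation term. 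The first is bounded by $C\sqrt\delta\,(\|f\|_{L^2(D)}+\|g\|_{H^2(D)})$ via the Ce\'a-plus-Theorem~\ref{thm:errorH1} argument used in Theorem~\ref{thm:errorH1_h}, since $\Deh\setminus D^{\eps+\delta}$ has width at most $\delta$.

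For the interpolation term I would take $v_h$ to be the nodal interpolant of the zero extension of $u^{\eps+\delta}$: every vertex in $D\setminus D^{\eps+\delta}$ carries nodal value zero (and in particular every vertex on $\partial\Deh$), so $v_h\in\Wehn$. Splitting $\Deh$ into elements (i) lying entirely in $D^{\eps+\delta}$, (ii) entirely outside $\overline{D^{\eps+\delta}}$, and (iii) straddling $\partial D^{\eps+\delta}$, family (i) yields the standard $Ch\,(\|f\|_{L^2(D)}+\|g\|_{H^2(D)})$ via Bramble--Hilbert together with $\|u^{\eps+\delta}\|_{H^2(D^{\eps+\delta})}\leq C(\|f\|_{L^2(D)}+\|g\|_{H^2(D)})$, and family (ii) contributes zero. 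The main obstacle, and technical heart of the proof, is the strip $U$ of family (iii): each triangle has diameter at most $\kappa$, and on $U$ the zero extension is only $H^1$, so the $H^2$-based Bramble--Hilbert estimate is unavailable. Here I would combine the vanishing trace $u^{\eps+\delta}|_{\partial D^{\eps+\delta}}=0$ (which via a Poincar\'e inequality in the thin strip bounds $\|u^{\eps+\delta}\|_{L^2(U)}$ by $\kappa\|\nabla u^{\eps+\delta}\|_{L^2(U)}$), the Sobolev embedding $H^2(D^{\eps+\delta})\hookrightarrow W^{1,p}(D^{\eps+\delta})$ for some $p>2$, H\"older's inequality in the strip of measure $|U|\lesssim\kappa$, and an inverse estimate for the piecewise-linear $v_h$ on each straddling element. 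Balancing the resulting powers of $\kappa$ produces the contribution $C\kappa^{2/3}(\|f\|_{L^2(D)}+\|g\|_{H^2(D)})$; summing the three element families completes the proof.
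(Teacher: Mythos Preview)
Your overall architecture matches the paper's: C\'ea's lemma on $\Deh$, insertion of the $H^2$-regular intermediate solution $u^{\eps+\delta}$ on the smooth inner domain $D^{\eps+\delta}$, and an elementwise split into the three families (inside, outside, straddling $\partial D^{\eps+\delta}$). The preliminary reduction to $g=0$ via Strang's second lemma is a legitimate cosmetic variant; the paper instead keeps $g$ throughout, extends $u^{\eps+\delta}$ by $g$ (not by zero) on $D\setminus D^{\eps+\delta}$, and interpolates the extended function so that $I_h u^{\eps+\delta}\in\Wehg$. Both routes reach the same elementwise interpolation problem.

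The genuine gap is in your treatment of the straddling family (iii). The ingredients you list---a Poincar\'e inequality in the strip, H\"older on the total strip measure $|U|\lesssim\kappa$, and an inverse estimate on $v_h$---do not combine to produce $\kappa^{2/3}$. In particular the inverse estimate $\|\nabla v_h\|_{L^2(T)}\lesssim\kappa^{-1}\|v_h\|_{L^2(T)}$ goes the wrong way: you would then need an $L^2$-bound on $I_h\tilde u$ in terms of $\|\tilde u\|_{L^2(T)}$ (nodal interpolation is not $L^2$-stable), and even granting a crude $L^\infty$ bound, summing over the $O(\kappa^{-1})$ straddling elements introduces a negative power of $\kappa$. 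Using H\"older globally on $U$ instead of elementwise yields at best $\kappa^{1/2-1/p}$, which is strictly weaker than $\kappa^{2/3}$.

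The paper's argument for (iii) is simpler and avoids both the inverse estimate and the strip Poincar\'e. Since the extended $u^{\eps+\delta}$ is piecewise $H^2$ on each straddling $T$ (and continuous across $\partial D^{\eps+\delta}$), it lies in $W^{1,p}(T)$ for $p=6$ by Sobolev embedding. One applies H\"older elementwise,
\[
 \|u^{\eps+\delta}-I_h u^{\eps+\delta}\|_{H^1(T)} \leq |T|^{\frac12-\frac1p}\,\|u^{\eps+\delta}-I_h u^{\eps+\delta}\|_{W^{1,p}(T)},
\]
and then the $W^{1,p}$-stability of the nodal interpolation on shape-regular elements (Brenner--Scott, Theorem~4.4.4), $\|I_h v\|_{W^{1,p}(T)}\leq C\|v\|_{W^{1,p}(T)}$, to obtain $\|u^{\eps+\delta}-I_h u^{\eps+\delta}\|_{H^1(T)}\leq C|T|^{1/3}\|u^{\eps+\delta}\|_{W^{1,p}(T)}$. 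With $|T|\leq\kappa^2$ and the piecewise embedding $\|u^{\eps+\delta}\|_{W^{1,p}(T)}\leq C(\|u^{\eps+\delta}\|_{H^2(T\cap D^{\eps+\delta})}+\|g\|_{H^2(T\setminus D^{\eps+\delta})})$, the squares sum to $\kappa^{4/3}(\|u^{\eps+\delta}\|_{H^2(D^{\eps+\delta})}^2+\|g\|_{H^2}^2)$. So the missing tool is the $W^{1,p}$-stability of $I_h$ on each element, not an inverse inequality.
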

\begin{proof}
  We observe that $u_h^\eps-v_h\in \Wehn\subset H^1_0(\Deh)$ for any $v_h\in\Wehg$.
  Therefore, Ce\'a's Lemma \cite{BrennerScott} provides the following quasi-optimal approximation result 
  \begin{align}
    \| \ueh-u_h^\eps\|_{H^1(\Deh)} &\leq C\inf_{v\in \Wehg} \|\ueh - v_h\|_{H^1(\Deh)}.\label{eq:Cea_h}
  \end{align}
 We will estimate the best-approximation error in the following.
 Let $u^\eps\in H^2(\De)$ be the solution to \eqref{eq:model2_wlog} with Dirichlet boundary datum $g$ on $\partial \De$, and let $u^{\eps+\delta}\in H^2(D^{\eps+\delta})$ be the solution to \eqref{eq:model2_wlog} with $\De$ replaced by $D^{\eps+\delta}$ and Dirichlet boundary datum $g$ on $\partial D^{\eps+\delta}$.
 In view of \eqref{eq:Dirichlet_diffuse}, we set $u^{\eps+\delta}=g$ on $D\setminus D^{\eps+\delta}$, which implies $u^{\eps+\delta}\in H^1(D)$.
 Using Theorem~\ref{thm:errorH1}, Theorem~\ref{thm:errorH1_h} with $\tilde f= f+\Delta g$, we obtain
 \begin{align*}
  \inf_{v_h\in \Wehg} \| \ueh - v_h\|_{H^1(\Deh)}&\leq \inf_{v\in \Wehg} \|u^{\eps+\delta} - v_h\|_{H^1(\Deh)} +  \|\ueh - u^\eps\|_{H^1(\Deh)}+\|u^\eps-u^{\eps+\delta}\|_{H^1(\Deh)} \\
  &\leq \inf_{v_h\in \Wehg} \|u^{\eps+\delta} - v_h\|_{H^1(\Deh)} + C\sqrt{\delta}(\|f\|_{L^2(D)}+\|g\|_{H^2(D)}).
 \end{align*}
 By embedding \cite{Adams1975}, we have that $u^{\eps+\delta}\in C^0(\overline{D})$. Therefore, $I_h u^{\eps+\delta}\in \Wehg$ is well-defined and
 \begin{align*}
  \inf_{v_h\in \Wehg} \|u^{\eps+\delta} - v_h\|_{H^1(\Deh)}\leq \|u^{\eps+\delta} - I_h u^{\eps+\delta}\|_{H^1(\Deh)}.
 \end{align*}
 Next, we estimate the right-hand side of the latter estimate on each element $T\in\T_h$. 
 We distinguish three cases.
 
 (i) $T\cap D^{\eps+\delta}=\emptyset$. 
 Since $u^{\eps+\delta}_{\mid T}=g_{\mid T}\in H^2(T)$, standard interpolation error estimates \cite[Theorem 4.4.4]{BrennerScott} yield
 $$
 \|u^{\eps+\delta} - I_h u^{\eps+\delta}\|_{H^1(T)}=\|g - I_h g\|_{H^1(T)}\leq C {\rm diam}(T)\|u\|_{H^2(T)}.
 $$
 
 (ii) $T\subset D^{\eps+\delta}$. Then $u^{\eps+\delta}_{\mid T}\in H^2(T)$, and as in (i) we have
 $$
 \|u^{\eps+\delta} - I_h u^{\eps+\delta}\|_{H^1(T)}\leq C {\rm diam}(T)\|u^{\eps+\delta}\|_{H^2(T)}.
 $$
 
 (iii) $int(T)\cap \partial D^{\eps+\delta}\neq \emptyset$. By embedding \cite[Theorem 5.4]{Adams1975}, we have that $u^{\eps+\delta}\in W^{1,p}(D^{\eps+\delta})$ for $p<\infty$ if $n=2$ or $p=6$ if $n=3$. Since $u^{\eps+\delta}=g$ on $\partial D^{\eps+\delta}$, we further have that $u^{\eps+\delta}\in W^{1,p}(T)$. Using H\"older's inequality and \cite[Theorem 4.4.4]{BrennerScott} we thus obtain
 \begin{align*}
  \|u^{\eps+\delta} - I_h u^{\eps+\delta}\|_{H^1(T)}&\leq |T|^{\frac{1}{2}-\frac{1}{p}} \|u^{\eps+\delta} - I_h u^{\eps+\delta}\|_{W^{1,p}(T)}
  \leq C |T|^{\frac{1}{3}} \|u^{\eps+\delta}\|_{W^{1,p}(T)}\\
  &\leq C |T|^{\frac{1}{3}} (\|g\|_{H^2(T\setminus D^{\eps+\delta})}+\|u^{\eps+\delta}\|_{H^2(T\cap D^{\eps+\delta})}).
 \end{align*}
 In cases (i) and (ii) we have that ${\rm diam}(T)\leq h$, and in case (iii) we have that $|T|\leq {\rm diam}(T)^2 \leq  \kappa^2$.
 Therefore, collecting the estimates in (i), (ii) and (iii), we obtain
 \begin{align*}
  &\|u^{\eps+\delta} - I_h u^{\eps+\delta}\|_{H^1(\Deh)}^2 = \sum_{T\subset \overline{\Deh}} \|u^{\eps+\delta} - I_h u^{\eps+\delta}\|_{H^1(T)}^2\\
  &=\sum_{int(T)\cap  \partial D^{\eps+\delta}=\emptyset} \|u^{\eps+\delta} - I_h u^{\eps+\delta}\|_{H^1(T)}^2
  + \sum_{int(T)\cap\partial D^{\eps+\delta}\neq\emptyset} \|u^{\eps+\delta} - I_h u^{\eps+\delta}\|_{H^1(T)}^2\\
  &\leq C \sum_{T\subset D^{\eps+\delta}} h^2 \|u^{\eps+\delta}\|_{H^2(T)}^2 + C\sum_{ int(T)\cap\partial D^{\eps+\delta}\neq\emptyset} \kappa^{\frac{4}{3}} (\|g\|_{H^2(T\setminus D^{\eps+\delta})}^2+\|u^{\eps+\delta}\|_{H^2(T\cap D^{\eps+\delta})}^2)\\
  &\leq C (h^2 +\kappa^{\frac{4}{3}})  \|u^{\eps+\delta}\|_{H^2(D^{\eps+\delta})}^2 + C \kappa^{\frac{4}{3}} \|g\|_{H^2(\Deh\setminus D^{\eps+\delta})}^2.
 \end{align*}
 Since $\|u^{\eps+\delta}\|_{H^2(D^{\eps+\delta})}\leq C (\|f\|_{L^2(D)}+\|g\|_{H^2(D)})$ by Lemma~\ref{lem:regularity_diffuse}, and $u_h^\eps-\ueh=I_h g-g$ on $D\setminus \Deh$, which can be estimated as above, the proof is complete.
\end{proof}
\begin{remark}
 According to \cite{Grisvard85} we have $\ueh\in H^{3/2+s_0}(\Deh)\cap H^1_0(\Deh)$ with $s_0>0$ depending on the shape regularity of $\T_h$. The above error estimates may alternatively be derived by estimating 
 $\inf_{v\in \Wehg} \|\ueh-v\|_{H^1(\Deh)}$
 using interpolation, cf. \cite{BrennerScott}. However, the constants will depend on $\|\ueh\|_{H^{3/2}(\Deh)}$, and as the number of re-entrant corners in $\Deh$ might in general be unbounded as $h\to 0$ the corresponding estimates might not be uniform in $h$ anymore.
\end{remark}
$L^2$-estimates are derived again via a duality argument. Note that the Dirichlet problem on $\Deh$ is not $H^2$-regular, and thus, we cannot rely on standard estimates as in the previous sections.
Duality arguments for the approximation of inhomogeneous Dirichlet boundary value problems for $H^2$-regular problems have e.g. been investigated in \cite{BaCaDo2004}.
\begin{theorem}\label{thm:error_L2_Galerkin}
 Let $\ueh\in H^1(\Deh)$ be the solution to \eqref{eq:ueh_weak} with $\ueh=g$ on $\partial\Deh$, and let $u_h^\eps \in \Wehg$ be the solution to \eqref{eq:Galerkin2}. Then 
 \begin{align*}
 \| \ueh-u_h^\eps\|_{L^2(D)} \leq C (\delta+\kappa^{\frac{4}{3}}+h^2)(\|f\|_{L^2(D)} + \|g\|_{H^2(D)}).
\end{align*}
\end{theorem}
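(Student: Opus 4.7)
The plan is to apply an Aubin--Nitsche duality argument parallel to the proofs of Theorems~\ref{thm:L2estimate} and \ref{thm:L2estimate_h}, with the auxiliary dual problem posed on the smooth domain $\De$ (where $H^2$-regularity holds) rather than on the non-convex polygonal $\Deh$. The same three-case element decomposition used in the proof of Theorem~\ref{thm:error_H1_Galerkin} will be re-used on the dual side in order to obtain the squared rate.

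First I would split
\[
 \|\ueh - u_h^\eps\|_{L^2(D)}^2 = \|\ueh - u_h^\eps\|_{L^2(\Deh)}^2 + \|g - I_h g\|_{L^2(D\setminus\Deh)}^2,
\]
using that $\ueh$ and $u_h^\eps$ reduce to $g$ and $I_h g$, respectively, on $D\setminus\Deh$. Standard nodal interpolation bounds the second term by $Ch^2\|g\|_{H^2(D)}$. For the first term I set $e = \ueh - u_h^\eps$ and let $z \in H^1_0(\De)\cap H^2(\De)$ solve $-\Delta z = \tilde e$ on $\De$, where $\tilde e$ is the zero-extension of $e$ from $\Deh$ to $\De$. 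By Lemma~\ref{lem:regularity_diffuse} applied on the $C^{1,1}$ domain $\De$, we have $\|z\|_{H^2(\De)} \leq C\|e\|_{L^2(\Deh)}$. Testing by $e$ and integrating by parts on $\Deh$ yields
\[
 \|e\|_{L^2(\Deh)}^2 = \int_{\Deh}\nabla e\cdot\nabla z\,\d x - \int_{\partial\Deh}(g-I_h g)\,\partial_\nu z\,\d\sigma.
\]

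For the interior term I would invoke Galerkin orthogonality $\int_{\Deh}\nabla e\cdot\nabla v_h\,\d x = 0$ for $v_h\in\Wehn$ and construct $v_h$ approximating $z$ via the three-case strategy from the proof of Theorem~\ref{thm:error_H1_Galerkin}. As there, this requires an intermediate auxiliary $H^2$-regular solution $z^{\eps+\delta}$ on the smaller sub-domain $D^{\eps+\delta}$ (extended by zero outside so that its nodal interpolant lies in $\Wehn$), together with the continuous estimate of Theorem~\ref{thm:errorH1_h} applied to $z$ vs.\ $z^{\eps+\delta}$ to control the $\sqrt\delta$ contribution from the domain mismatch. This gives $\inf_{v_h\in\Wehn}\|\nabla(z-v_h)\|_{L^2(\Deh)} \leq C(\sqrt\delta+\kappa^{2/3}+h)\|z\|_{H^2(\De)}$. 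Combined with the primal $H^1$-estimate from Theorem~\ref{thm:error_H1_Galerkin} and the regularity of $z$, the interior term contributes $C(\delta+\kappa^{4/3}+h^2)\|e\|_{L^2(\Deh)}(\|f\|_{L^2}+\|g\|_{H^2})$ to $\|e\|_{L^2(\Deh)}^2$, which is the asserted rate after dividing through.

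The main obstacle is the boundary integral $\int_{\partial\Deh}(g-I_h g)\,\partial_\nu z\,\d\sigma$. A naive Cauchy--Schwarz combined with $\|g-I_h g\|_{L^2(\partial\Deh)}\leq Ch^{3/2}\|g\|_{H^2}$ and the trace bound $\|\partial_\nu z\|_{L^2(\partial\Deh)}\leq C\|z\|_{H^2(\De)}$ yields only $O(h^{3/2})$, which would degrade the claimed $h^2$ rate. My plan is to extract the missing $h^{1/2}$ through a face-by-face super-approximation argument exploiting that $(g - I_h g)$ vanishes at all vertices (so that the Peano-kernel representation gives sharper face-integrals than the pointwise size suggests) and that $\partial_\nu z$ may be replaced by its piecewise-constant average on each face of $\partial\Deh$ at the cost of $h\|z\|_{H^2(\De)}$ by Bramble--Hilbert. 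Carrying out this boundary-term analysis rigorously while tracking the dependence on the shape regularity of $\T_h$ is the most delicate step of the proof.
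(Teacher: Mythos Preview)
Your overall duality strategy---posing the dual problem on the smooth domain $\De$, where $H^2$-regularity is available, and re-using the three-case interpolation argument from Theorem~\ref{thm:error_H1_Galerkin} on the dual side---is exactly what the paper does. The difference, and the source of your difficulty, lies in \emph{where} you integrate by parts.

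You integrate by parts on $\Deh$, which produces the boundary term $\int_{\partial\Deh}(g-I_hg)\,\partial_\nu z\,\d\sigma$, and then worry that it only yields $O(h^{3/2})$. But the relevant local mesh size on $\partial\Deh$ is $\delta$, not $h$: any element $T\subset\Se_h$ with a face on $\partial\Se_h$ necessarily intersects $\partial\Se$ (otherwise $T\subset\Se$ would force that flat face to lie inside the curved hypersurface $\partial\Se$), so $\mathrm{diam}(T)\le\delta$, and by shape regularity the neighbouring element on the $\Deh$ side has diameter $\le C\delta$ as well. Your ``naive'' Cauchy--Schwarz therefore already gives $C\delta^{3/2}\|g\|_{H^2}\|e\|_{L^2}$, which is dominated by the $\delta$ term in the target rate; the Peano-kernel/super-approximation plan is unnecessary. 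What \emph{does} require justification in your route is the uniform trace bound $\|\partial_\nu z\|_{L^2(\partial\Deh)}\le C\|z\|_{H^2(\De)}$ on the $h$-dependent polygonal surface $\partial\Deh$; it follows from the element-wise trace inequality combined with Theorem~\ref{thm:estimate} applied to $\nabla z$ on the strip of width $\sim\delta$, but it is not automatic.

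The paper avoids both issues by integrating by parts on $\De$ instead. Since $\eh=\ueh-u_h^\eps$ (with the natural extensions $\ueh=g$, $u_h^\eps=I_hg$ on $D\setminus\Deh$) belongs to $H^1(\De)$, one obtains
\[
 \|\eh\|_{L^2(\De)}^2=\int_{\De}\nabla z^{\eps,h}\cdot\nabla\eh\,\d x-\int_{\partial\De}\partial_n z^{\eps,h}\,\eh\,\d\sigma.
\]
Now the boundary lives on the smooth $C^{1,1}$ surface $\partial\De$, so Lemma~\ref{lem:trace_unweighted} bounds $\|\partial_n z^{\eps,h}\|_{L^2(\partial\De)}$ directly, and on $\partial\De\subset\Leh$ one has $\eh=I_hg-g$, which an element-wise $L^\infty$ interpolation estimate controls by $C\delta\|g\|_{H^2}$. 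The only price is an extra volume contribution over the thin strip $\De\setminus\Deh$, dispatched at once via Theorem~\ref{thm:estimate} (giving $\|\nabla z^{\eps,h}\|_{L^2(\De\setminus\Deh)}\le C\sqrt{\delta}\|\eh\|_{L^2}$) together with the standard interpolation bound on $\nabla(I_hg-g)$. In short: your route works once you replace $h$ by $\delta$ in the boundary analysis, but the paper's choice to integrate on $\De$ rather than on $\Deh$ makes the whole argument cleaner and removes the perceived obstacle entirely.
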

\begin{proof}
We define $\eh=\ueh-u^\eps_h$, and let $z^{\eps,h}\in H^1_0(\De)\cap H^2(\De)$ be the solution to
\begin{align*}
 -\Delta  z^{\eps,h} = \eh \quad\text{in } \De.
\end{align*}
Then, we obtain upon integration by parts
\begin{align}\label{eq:dual_h}
 \|\eh \|_{L^2(\De)}^2 = \int_{\De} \nabla z^{\eps,h} \nabla \eh \d x - \int_{\partial\De}\partial_n z^{\eps,h}\eh\d\sigma.
\end{align}
We estimate the terms on the right-hand side separately.

(i) Using Galerkin orthogonality, we obtain 
\begin{align*}
 &\int_{\De} \nabla z^{\eps,h} \nabla \eh\d x = 
 \int_{\De\setminus\Deh} \nabla z^{\eps,h} \nabla \eh\d x + \int_{\Deh} \nabla z^{\eps,h} \nabla \eh\d x\\
 &\leq \|\nabla z^{\eps,h}\|_{L^2(\De\setminus\Deh)} \|\nabla \eh\|_{L^2(\De\setminus\Deh)} + \| \nabla \eh\|_{L^2(\Deh)} \inf_{v_h\in\Wehn}\|\nabla z^{\eps,h}-\nabla v_h\|_{L^2(\Deh)}.
\end{align*}

(ia) Using \eqref{eq:inclusion}, Theorem~\ref{thm:estimate}, Lemma~\ref{lem:trace_unweighted} and Lemma~\ref{lem:regularity_diffuse}, we see that
\begin{align*}
  \|\nabla z^{\eps,h}\|_{L^2(\De\setminus\Deh)}\leq C\sqrt{\delta}\|z^{\eps,h}\|_{H^2(\De)}\leq C \sqrt{\delta}\|\eh\|_{L^2(\De)}.
\end{align*}
Defining the set 
\begin{align*}
 \Leh = \bigcup_{T\in\T_h: T\cap\partial\Se \neq \emptyset} T,
\end{align*}
we see that $\De\setminus\Deh \subset \Leh$. 
Therefore, since $\eh=I_h g-g$ on $\De\setminus \Deh$, we obtain using standard interpolation estimates and the definition of $\delta$ that
\begin{align*}
  \|\nabla \eh\|_{L^2(\De\setminus\Deh)} \leq  \|\nabla I_h g - \nabla g\|_{L^2(\Leh)}\leq C\delta \|g\|_{H^2(\Leh)},
\end{align*}

(ib) The term $ \| \nabla \eh\|_{L^2(\Deh)}$ can be estimated by Theorem~\ref{thm:error_H1_Galerkin}
The best-approximation error can be estimated as in the proof of Theorem~\ref{thm:error_H1_Galerkin}, i.e.
\begin{align*}
  \inf_{v_h\in\Wehn}\|\nabla z^{\eps,h}-\nabla v_h\|_{L^2(\Deh)}\leq C \sqrt{\delta}\|\eh\|_{L^2(D)} + C (\sqrt{\delta}+\kappa^{\frac{2}{3}}+h)\|\eh\|_{L^2(D)}.
\end{align*}
Summarizing, for the first term on the right-hand side in \eqref{eq:dual_h} we have
\begin{align}\label{eq:dual_h1}
  \int_{\De} \nabla z^{\eps,h} \nabla \eh \d x \leq C(\sqrt{\delta}+\kappa^{\frac{2}{3}}+h)^2(\|f\|_{L^2(D)} + \|g\|_{H^2(D)})\|\eh\|_{L^2(D)}.
\end{align}

(ii) An application of the Cauchy-Schwarz inequality and $H^2$-regularity of $z^{\eps,h}$ yield
\begin{align*}
 \int_{\partial\De}\partial_n z^{\eps,h}\eh\d\sigma \leq \|\partial_n z^{\eps,h}\|_{L^2(\partial \De)} \|\eh\|_{L^2(\partial\De)} \leq C\|\eh\|_{L^2(\De)} \|\eh\|_{L^2(\partial\De)},
\end{align*}
where we have also applied Lemma~\ref{lem:trace_unweighted}.
We now use $\eh=I_h g-g$ on $\Leh$ and $\partial\De\subset \Leh$ to estimate further
\begin{align*}
  \|\eh\|_{L^2(\partial\De)}^2 &= \sum_{T\subset\Leh} \|I_h g -g\|_{L^2(\partial\De\cap T)}^2\\
  &\leq \sum_{T\subset\Leh} |\partial\De\cap T| \|I_h g-g\|_{L^\infty(T)}^2\\
  &\leq  \sum_{T\subset\Leh} |\partial\De\cap T| {\rm diam}(T) \|g\|_{H^2(T)}^2\\
  &\leq C \delta^2 \|g\|_{H^2(\Leh)}^2.
\end{align*}
Here, we used that $|\partial\De\cap T|\leq C{\rm diam}(T)$ and ${\rm diam}(T)\leq \delta$ for all $T$ such that $T\cap \Leh\neq \emptyset$, and \cite[Corollary~4.4.7]{BrennerScott} in order to estimate the $L^\infty$ interpolation error.

Summarizing, for the second term in \eqref{eq:dual_h} we have
\begin{align}\label{eq:dual_h2}
  \int_{\partial\De}\partial_n z^{\eps,h}\eh\d\sigma\leq C \delta \|g\|_{H^2(D)}\|\eh\|_{L^2(D)}.
\end{align}
The proof is finished by observing that
\begin{align*}
 \|\eh\|_{L^2(D)}\leq  \|\eh\|_{L^2(\De)} + \|\eh\|_{L^2(D\setminus\De)} \leq  \|\eh\|_{L^2(\De)} + C h^2 \|g\|_{H^2(D)},
\end{align*}
and using \eqref{eq:dual_h}--\eqref{eq:dual_h2} and Young's inequality to estimate the first term on the right-hand side of the latter inequality.
\end{proof}
A combination of Theorem~\ref{thm:main2} and Theorems~\ref{thm:error_H1_Galerkin}, \ref{thm:error_L2_Galerkin} proves Theorem~\ref{thm:main3}.
\begin{remark}\label{rem:saddle-point}
  Instead of incorporating the condition $\ueh=g$ on $\Se_h$ in the approximation space, this condition might also be incorporated via a saddle-point formulation.
  For instance, one might consider:
  Find $(u_h^\eps,\lambda_h^\eps) \in \Vh\times X_h^\eps$ such that
  \begin{align}
    \int_{\Omega} \nabla u^\eps_{h}\cdot \nabla v_h\d x + \int_{\Se_h} \lambda^\eps_{h} v_h\d x &= \int_{\Omega} fv_h \d x, \label{eq:saddle_diffuse1_h}\\
    \int_{\Se_h} u^\eps_{h} \mu_h\d x&=\int_{\Se_h} g \mu_h\d x\label{eq:saddle_diffuse2_h}
  \end{align}
  for all $(v_h,\mu_h) \in \Vh\times X_h^\eps$ with $\Vh$ as above and $X_h^\eps$ chosen such that an inf-sup condition holds, i.e. there exists $c>0$ such that for all $\mu_h\in X_h^\eps$
  \begin{align}\label{eq:infsup}
    \int_{\Se_h} \mu_h v_h\d x \geq c \|v_h\|_{H^1(\Omega)}\|\mu_h\|_{X_h^\eps}\quad\text{for all } v_h\in \Vh
  \end{align}
  holds. On the continuous level such a condition may be verified for functions $\mu\in X$ being defined as the closure of $L^2(\Se_h)$ with respect to the norm
  $$
  \|\mu\|_{X} = \sup_{v\in H_0^1(\Omega)\setminus 0} \int_{\Se_h} \mu v\d x/\|v\|_{H^1(\Omega)}.
  $$
  The verification of \eqref{eq:infsup} is completed by constructing a Fortin projector $P:H^1(\Se_h)\to \Vh$ such that $P$ is bounded and
  \begin{align*}
   \int_{\Se_h} \mu_h (Pv-v)\d x=0 \quad\text{for all } v\in H^1(\Se_h)\text{ and } \mu_h\in X_h,
  \end{align*}
  see \cite[Proposition 2.8]{BrezziFortin}.
  If $X_h=\{v_{h\mid \Se_h}: v_h\in \Vh\}$, the construction of a Fortin projector amounts to $H^1$-stability of the $L^2$-projection, which has recently been shown in \cite{BankYserentant2014} for a large class of finite element approximation spaces.
  We remark that, if $g$ is replaced by $I_h g$ in \eqref{eq:saddle_diffuse2_h}, then the choice $\mu_h=u^\eps_h-I_hg$ yields $u^\eps_h=I_h g$. Choosing $v_h\in\Vehn$ then shows that $u^\eps_h$ is a solution to \eqref{eq:Galerkin}.
  Penalized saddle-point problems might also be considered, cf. \cite[II.4]{BrezziFortin}, and, for instance, the method in \cite{ReuterHillHarrison2012} might be interpreted as such.
  Moreover, surface PDEs, when appropriately extended to $\Se$, might be incorporated as an additional constraint.

\end{remark}

\section{Numerical Examples}\label{sec:numerics}
Demonstrating the validity of the above derived results, we set $\Omega=(-2,2)\times (-2,2)\subset\RR^2$,
%
and let $\Gamma=\partial B_{1}(0)$. Furthermore, we let $x=(x_1,x_2)\in \Omega$ and
\begin{align*}
 u(x)&= (4-x_1^2)(4-x_2^2) \chi_{D_2}(x) + (4-x_1^2)(4-x_2^2) \exp(1-|x|^2) \chi_{\overline{D_1}}(x),\\
 g(x)&= (4-x_1^2)(4-x_2^2) \cos(1-|x|^2),\\
 f(x)&= -\Delta u(x), \quad x\in \Omega\setminus\Gamma,\qquad f(x)=0,\quad x\in\Gamma. 
\end{align*}
Here, $\chi_{\overline{D_1}}$ and $\chi_{D_2}$ denote the characteristic functions of $\overline{D_1}$ and $D_2$, respectively.
One verifies that this choice of functions yields a solution to \eqref{eq:model}.
For the sake of simplicity, we have chosen an arbitrary globally defined and sufficiently smooth function $g$ such that $g(x)=u(x)$ for $x\in \Gamma$. 
As noticed in the introduction, if $g$ is defined on $\Gamma$ only, we have to construct a suitable extension of $g$ to a neighborhood of $\Gamma$.
For our numerical tests, we employed the linear nodal interpolant $I_h d_\Gamma$ of $d_\Gamma(x)=|x|-1$ in order to define $\Se_h$, namely, 
setting $\omega_h^\eps(x) = \frac{1}{2}(1+S(-\frac{I_hd_\Gamma(x)}{\eps}))$, we note that ${\rm supp}|\nabla\omega_h^\eps|=\Se_h$.
In all our tests below, we do not employ aligned meshes, i.e. the interface approximations $\partial\Se_h$ are not aligned with $\Gamma$.
In Figure~\ref{fig:sketch} we have depicted the geometric setup.
The Galerkin approximation $u^\eps_h$ is then computed via \eqref{eq:Galerkin} with $f$ replaced by $I_h f$, which is a sufficient approximation for $f$ on $\Omega\setminus\Se_h$ for the chosen $f$.

\begin{figure}
 \centering
 \includegraphics[width=.45\textwidth]{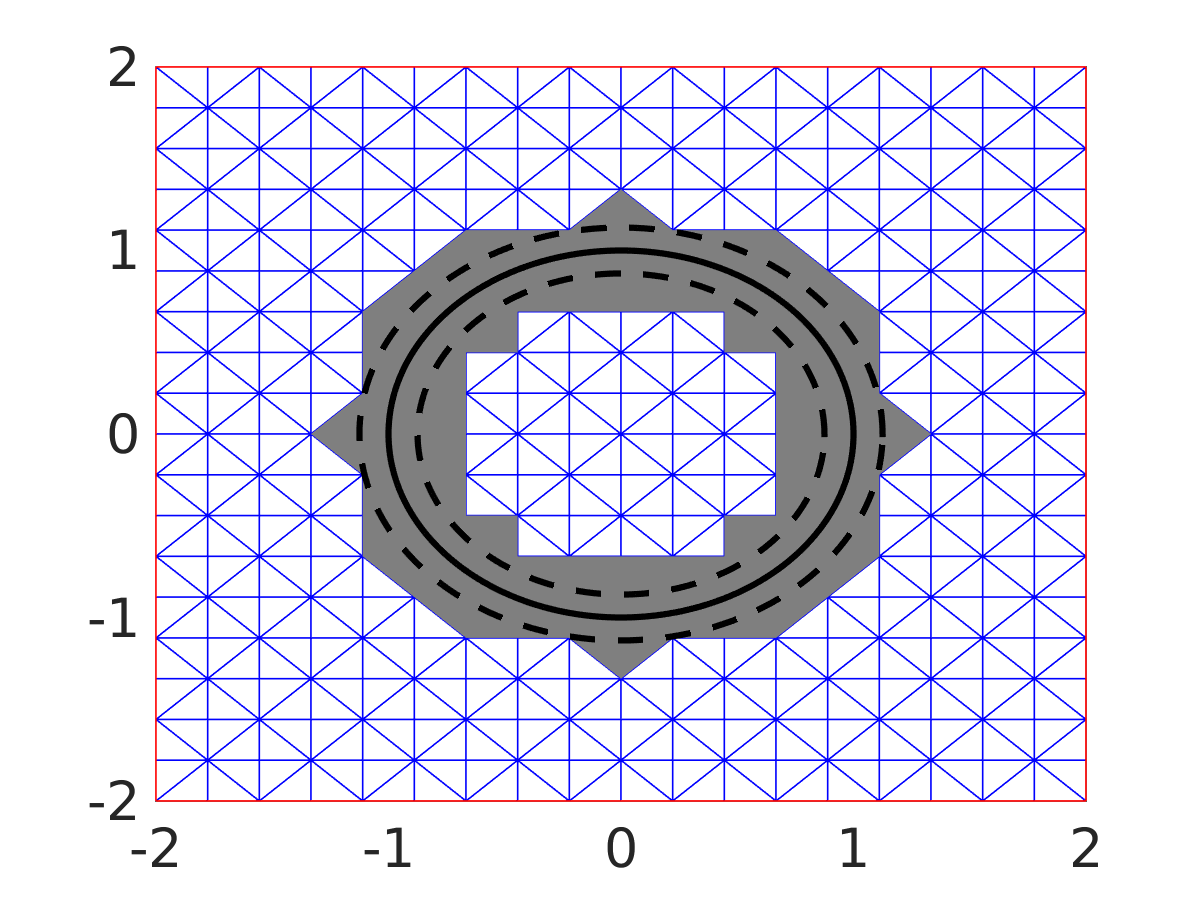}
 \includegraphics[width=.45\textwidth]{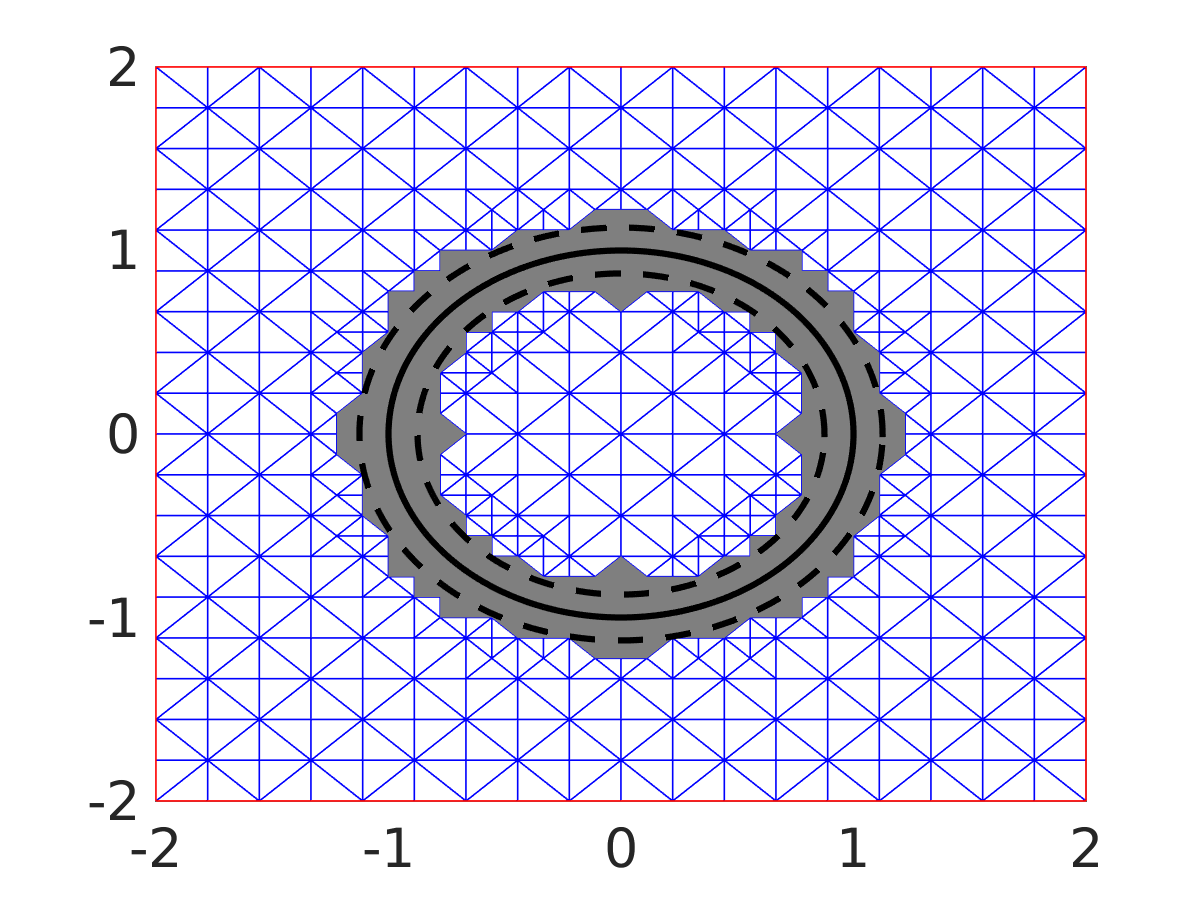}
 \caption{\label{fig:sketch} Sketch of computational domain and corresponding triangulation. The solid black line represents $\Gamma$. The dotted black lines correspond to $\partial\Se$. The gray area corresponds to $\Se_h$ for $\eps=1/8$. Left: $h=\delta$ and $361$ vertices in total. Right: $\delta=h^2$ and $593$ vertices in total.}
\end{figure}

In our first experiment we choose a uniform triangulation, i.e. $\delta=h=\kappa$, with $332\,929$ vertices; cf. Figure~\ref{fig:sketch} left. 
The convergence results for different values of $\eps=1/2^i$, $i=1,\ldots,20$ are depicted in Figure~\ref{fig:convergence_1}. We observe the predicted convergence rates $O(\eps)$ for the $L^p(\Omega)$-norm, $p\in\{2,\infty\}$, and $O(\sqrt{\eps})$ for $H^1(\Omega)$-norm.
In particular, the error behaves monotonically and saturates at a certain level, which is due to the chosen mesh.

\pgfplotstableread{rates_in_eps.dat}{\ratesineps}
\begin{figure}[htb]
        \centering
        \begin{tikzpicture}
                \begin{loglogaxis}[
                        xlabel=$\eps$,
                        width=5.5cm,
                        log basis x=2,
                        log basis y=2,
                        legend pos=north west,
                        ymin=0.125,
                        ]
                        \addplot+[color=black,mark=]              table[header=true,x index=0,y expr=(\thisrowno{0})*50,skip coords between index={7}{20}]{\ratesineps};
                        \addplot+[only marks,mark=x,color=black]  table[header=true,x index=0,y=l2]{\ratesineps};
                        \legend{$O(\eps)$,$L^2$}
                \end{loglogaxis}
        \end{tikzpicture}
        \begin{tikzpicture}
                \begin{loglogaxis}[
                        xlabel=$\eps$,
                        width=5.5cm,
                        log basis x=2,
                        log basis y=2,
                        legend pos=north west
                        ]
                        \addplot+[color=black,mark=]              table[header=true,x index=0,y expr=sqrt(\thisrow{eps})*70,skip coords between index={7}{20}]{\ratesineps};
                        \addplot+[only marks,mark=x,color=black]  table[header=true,x index=0,y=h1]{\ratesineps};
                        \legend{$O(\sqrt{\eps})$,$H^1$}
                \end{loglogaxis}
        \end{tikzpicture}
        \begin{tikzpicture}
                \begin{loglogaxis}[
                        xlabel=$\eps$,
                        width=5.5cm,
                        log basis x=2,
                        log basis y=2,
                        legend pos=north west
                        ]
                        \addplot[color=black,mark=]              table[header=true,x index=0,y expr=(\thisrow{eps})*30,skip coords between index={7}{20}]{\ratesineps};
                        \addplot[only marks,mark=x,color=black]  table[header=true,x index=0,y=linf]{\ratesineps};
                        \legend{$O(\eps)$,$L^\infty$}
                \end{loglogaxis}
        \end{tikzpicture}
      \caption{\label{fig:convergence_1} Convergence rates in $\eps$: Uniform mesh with $332\,929$ vertices. The solid lines correspond to the predicted rates ($\eps$, $\sqrt{\eps}$, $\eps$ from left to right). The actual errors $\|u-u^\eps_h\|_{L^2(\Omega)}$, $\|u-u^\eps_h\|_{H^1(\Omega)}$, $\|u-u^\eps_h\|_{L^\infty(\Omega)}$ (from left to right) are marked by crosses.}
\end{figure}
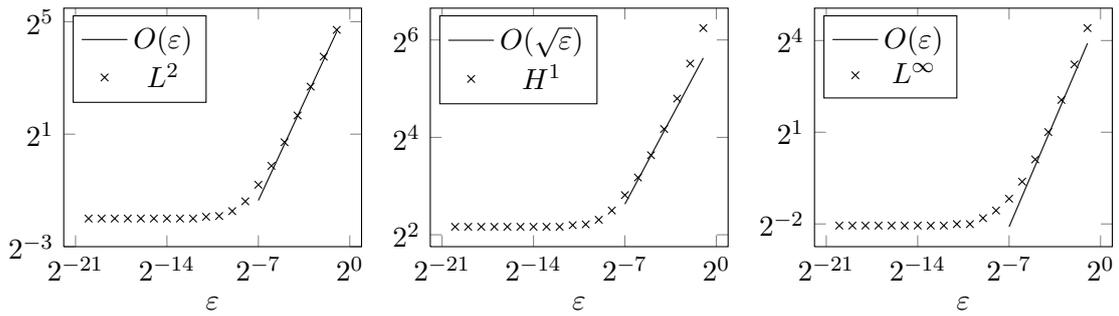


In a second experiment we also chose uniform triangulations, i.e. $\delta=h=\kappa$, and $\eps=1/2^{20}$ fixed. We used different mesh sizes with number of vertices in $\{5\,329, 21\,025, 83\,521, 332\,929\}$. Notice that in this example $h>10^{-3}$ and $\eps\approx 10^{-6}$, i.e. $\eps \ll h$. The convergence results for different mesh sizes are depicted in Figure~\ref{fig:convergence_2}. We observe the predicted convergence rates $O(h)$ for the $L^p(\Omega)$-norm, $p\in\{2,\infty\}$, and $O(\sqrt{h})$ for the $H^1(\Omega)$-norm.

\pgfplotstableread{rates_in_h.dat}{\ratesinh}
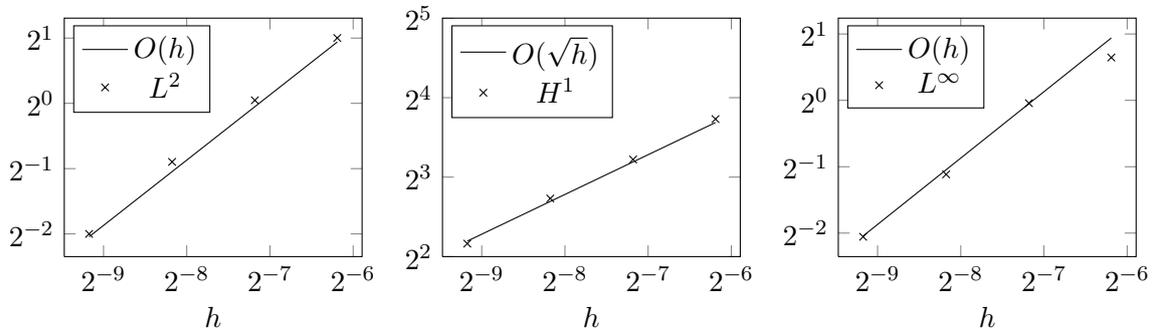
\begin{figure}[htb]
        \centering
        \begin{tikzpicture}
                \begin{loglogaxis}[
                        xlabel=$h$,
                        width=5.5cm,
                        log basis x=2,
                        log basis y=2,
                        legend pos=north west
                        ]
                        \addplot+[color=black,mark=]              table[header=true,x expr=1/(sqrt(\thisrowno{0})),y expr=1/sqrt(\thisrowno{0})*140]{\ratesinh};
                        \addplot+[only marks,mark=x,color=black]  table[header=true,x expr=1/(sqrt(\thisrowno{0})),y index=1]{\ratesinh};
                        \legend{$O(h)$,$L^2$}
                \end{loglogaxis}
        \end{tikzpicture}
        \begin{tikzpicture}
                \begin{loglogaxis}[
                        xlabel=$h$,
                        width=5.5cm,
                        log basis x=2,
                        log basis y=2,
                        ymin=4,
                        ymax=32,
                        legend pos=north west
                        ]
                        \addplot+[color=black,mark=]              table[header=true,x expr=1/(sqrt(\thisrowno{0})),y expr=sqrt(1/sqrt(\thisrowno{0}))*110]{\ratesinh};
                        \addplot+[only marks,mark=x,color=black]  table[header=true,x expr=1/(sqrt(\thisrowno{0})),y index=2]{\ratesinh};
                        \legend{$O(\sqrt{h})$,$H^1$}
                \end{loglogaxis}
        \end{tikzpicture}
        \begin{tikzpicture}
                \begin{loglogaxis}[
                        xlabel=$h$,
                        width=5.5cm,
                        log basis x=2,
                        log basis y=2,
                        legend pos=north west
                        ]
                        \addplot+[color=black,mark=]              table[header=true,x expr=1/(sqrt(\thisrowno{0})),y expr=1/sqrt(\thisrowno{0})*140]{\ratesinh};
                        \addplot+[only marks,mark=x,color=black]  table[header=true,x expr=1/(sqrt(\thisrowno{0})),y index=3]{\ratesinh};
                        \legend{$O(h)$,$L^\infty$}
                \end{loglogaxis}
        \end{tikzpicture}
     \caption{\label{fig:convergence_2} Convergence rates in $h$: Uniform mesh with number of vertices in $\{5\,329, 21\,025, 83\,521, 332\,929\}$ and fixed $\eps=1/2^{20}$. The solid lines correspond to the predicted rates ($h$, $\sqrt{h}$, $h$ from left to right). The actual errors $\|u-u^\eps_h\|_{L^2(\Omega)}$, $\|u-u^\eps_h\|_{H^1(\Omega)}$, $\|u-u^\eps_h\|_{L^\infty(\Omega)}$ (from left to right)  are marked by crosses.}
\end{figure}

In a third experiment we chose a locally refined mesh such that $\delta=h^2$ and $\kappa\leq 4\delta$, which has been obtained from the meshes in the second experiment by repeated refinement of those triangles having nonempty intersection with $\partial\Se$; cf. Figure~\ref{fig:sketch} right. The resulting meshes have number of vertices in $\{10\,053,$ $41\,445, 168\,717, 681\,957\}$. The diffuse interface width $\eps=1/2^{20}$ is fixed. The convergence results for different mesh sizes are depicted in Figure~\ref{fig:convergence_3}. We observe the predicted convergence rates $O(h^2)$ for the $L^p(\Omega)$-norm, $p\in\{2,\infty\}$, and $O(h)$ for the $H^1(\Omega)$-norm.

\pgfplotstableread{rates_in_h_loc.dat}{\ratesinhloc}
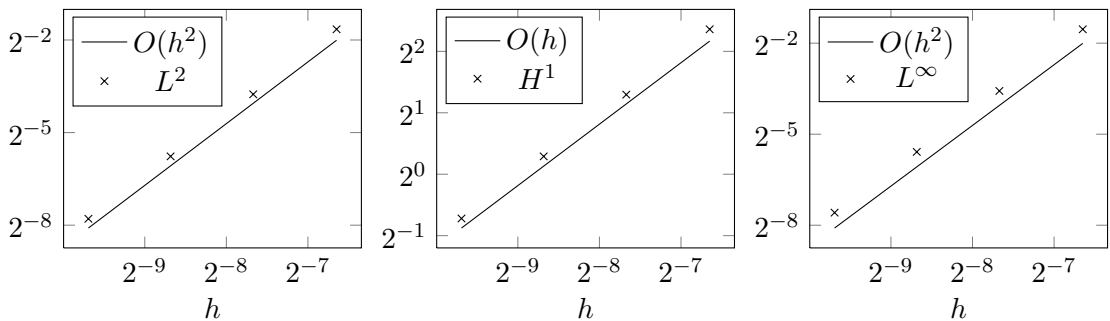
\begin{figure}[htb]
        \centering
        \begin{tikzpicture}
                \begin{loglogaxis}[
                        xlabel=$h$,
                        width=5.5cm,
                        log basis x=2,
                        log basis y=2,
                        legend pos=north west
                        ]
                        \addplot+[color=black,mark=]              table[header=true,x expr=1/(sqrt(\thisrowno{0})),y expr=1/(\thisrowno{0})*2500]{\ratesinhloc};
                        \addplot+[only marks,mark=x,color=black]  table[header=true,x expr=1/(sqrt(\thisrowno{0})),y index=1]{\ratesinhloc};
                        \legend{$O(h^2)$,$L^2$}
                \end{loglogaxis}
        \end{tikzpicture}
        \begin{tikzpicture}
                \begin{loglogaxis}[
                        xlabel=$h$,
                        width=5.5cm,
                        log basis x=2,
                        log basis y=2,
                        legend pos=north west
                        ]
                        \addplot+[color=black,mark=]              table[header=true,x expr=1/(sqrt(\thisrowno{0})),y expr=(1/sqrt(\thisrowno{0}))*450]{\ratesinhloc};
                        \addplot+[only marks,mark=x,color=black]  table[header=true,x expr=1/(sqrt(\thisrowno{0})),y index=2]{\ratesinhloc};
                        \legend{$O(h)$,$H^1$}
                \end{loglogaxis}
        \end{tikzpicture}
        \begin{tikzpicture}
                \begin{loglogaxis}[
                        xlabel=$h$,
                        width=5.5cm,
                        log basis x=2,
                        log basis y=2,
                        legend pos=north west
                        ]
                        \addplot+[color=black,mark=]              table[header=true,x expr=1/(sqrt(\thisrowno{0})),y expr=1/(\thisrowno{0})*2500]{\ratesinhloc};
                        \addplot+[only marks,mark=x,color=black]  table[header=true,x expr=1/(sqrt(\thisrowno{0})),y index=3]{\ratesinhloc};
                        \legend{$O(h^2)$,$L^\infty$}
                \end{loglogaxis}
        \end{tikzpicture}
        \caption{\label{fig:convergence_3} Convergence rates in $h$: Locally refined mesh with number of vertices in $\{10\,053, 41\,445, 168\,717, 681\,957\}$ and fixed $\eps=1/2^{20}$. The solid lines correspond to the predicted rates ($h^2$, $h$, $h^2$ from left to right). The actual errors $\|u-u^\eps_h\|_{L^2(\Omega)}$, $\|u-u^\eps_h\|_{H^1(\Omega)}$, $\|u-u^\eps_h\|_{L^\infty(\Omega)}$ (from left to right)  are marked by crosses.}
\end{figure}

All these convergence rates are predicted by Theorem~\ref{thm:main3}, cf. Theorem~\ref{thm:errorlinfty_h} for the $L^\infty$-estimates on the continuous level. 
For locally refined meshes around $\partial\Se$ such that $\delta=h^2$ we thus recover the convergence rates of the usual finite element method when used in combination with aligned grids.
The example presented here also shows that these convergence rates are sharp in general.
The number of degrees of freedom roughly doubles in two spatial dimensions compared to the corresponding uniform meshes, i.e. the computational complexity increases only by a multiplicative factor independent of the degrees of freedom. For $n=3$, the situation is worse regarding the number of degrees of freedom and the results presented here might be extended using anisotropic finite elements, see e.g. \cite{Apel99}.
%


\section{Conclusions}\label{sec:conclusion}
In this paper a diffuse interface method for solving Poisson's equation with embedded interface conditions has been investigated.
The diffuse interface method could be interpreted as a standard Dirichlet problem on approximating domains. 
Thus, this method can also be used to numerically solve Poisson's equation with Dirichlet boundary conditions on complicated domains.
Error estimates in $H^1$-, $L^2$- and $L^\infty$-norms have been proven and verified numerically.
The use of uniform meshes gave suboptimal convergence rates in terms of the mesh size.
This is explained by the fact that the approximating domains are polygonal and non-convex in general, and therefore the corresponding Dirichlet problems do not allow for $H^2$-regular solutions in general.
Using locally refined meshes we could recover order-optimal convergence rates in terms of the mesh size. 
In two spatial dimensions, the computational complexity is only increased by a constant factor when using locally refined meshes. For three dimensional problems, the analysis might be extended to cover anisotropic finite elements, which leads to a computationally efficient method.
The presented method might furthermore be applied to more general second order elliptic, and the generalization to parabolic equations with $\Gamma=\Gamma(t)$ seems possible.
Moreover, the method might be extended to interface problems where $g=g(u)$ couples the two subproblems investigated here. 
It is open, whether one can improve the method, for instance by a post-processing step, in order to retrieve order-optimal convergence with respect to the mesh size (at least away from the interface) in the case when quasi-uniform grids are used.

\section*{Acknowledgements}
The author thanks Prof. Herbert Egger and the anonymous referees for constructive suggestions when finishing this work.
Support by ERC via Grant EU FP 7 - ERC Consolidator Grant 615216 LifeInverse is gratefully acknowledged. 


\begin{thebibliography}{10}

\bibitem{AbelsLamStinner2015}
H.~Abels, K.~F. Lam, and B.~Stinner.
\newblock Analysis of the diffuse domain approach for a bulk-surface coupled
  pde system.
\newblock {\em SIAM Journal on Mathematical Analysis}, 47(5):3687--3725, 2015.

\bibitem{Adams1975}
R.~A. Adams.
\newblock {\em Sobolev spaces}.
\newblock Academic Press [A subsidiary of Harcourt Brace Jovanovich,
  Publishers], New York-London, 1975.
\newblock Pure and Applied Mathematics, Vol. 65.

\bibitem{Apel99}
T.~Apel.
\newblock {\em Anisotropic finite elements: local estimates and applications}.
\newblock Advances in Numerical Mathematics. B. G. Teubner, Stuttgart, 1999.

\bibitem{Arrieta08}
J.~M. Arrieta, A.~Rodr{\'{\i}}guez-Bernal, and J.~D. Rossi.
\newblock The best {S}obolev trace constant as limit of the usual {S}obolev
  constant for small strips near the boundary.
\newblock {\em Proc. Roy. Soc. Edinburgh Sect. A}, 138(2):223--237, 2008.

\bibitem{Babuska72}
I.~Babu{\v{s}}ka.
\newblock The finite element method with {L}agrangian multipliers.
\newblock {\em Numer. Math.}, 20:179--192, 1972/73.

\bibitem{BankYserentant2014}
R.~E. Bank and H.~Yserentant.
\newblock On the {$H^1$}-stability of the {$L_2$}-projection onto finite
  element spaces.
\newblock {\em Numer. Math.}, 126(2):361--381, 2014.

\bibitem{BarrettElliot1987}
J.~W. Barrett and C.~M. Elliott.
\newblock Fitted and unfitted finite-element methods for elliptic equations
  with smooth interfaces.
\newblock {\em IMA J. Numer. Anal.}, 7(3):283--300, 1987.

\bibitem{BaCaDo2004}
S.~Bartels, C.~Carstensen, and G.~Dolzmann.
\newblock Inhomogeneous {D}irichlet conditions in a priori and a posteriori
  finite element error analysis.
\newblock {\em Numer. Math.}, 99(1):1--24, 2004.

\bibitem{BastianEngwer2009}
P.~Bastian and C.~Engwer.
\newblock An unfitted finite element method using discontinuous {G}alerkin.
\newblock {\em Internat. J. Numer. Methods Engrg.}, 79(12):1557--1576, 2009.

\bibitem{Bertalmio2001}
M.~Bertalm{\'{\i}}o, L.-T. Cheng, S.~Osher, and G.~Sapiro.
\newblock Variational problems and partial differential equations on implicit
  surfaces.
\newblock {\em J. Comput. Phys.}, 174(2):759--780, 2001.

\bibitem{Blair1973}
J.~J. Blair.
\newblock Bounds for the change in the solutions of second order elliptic
  {PDE}'s when the boundary is perturbed.
\newblock {\em SIAM J. Appl. Math.}, 24:277--285, 1973.

\bibitem{BrambleDupontThomee1972}
J.~H. Bramble, T.~Dupont, and V.~Thom{\'e}e.
\newblock Projection methods for {D}irichlet's problem in approximating
  polygonal domains with boundary-value corrections.
\newblock {\em Math. Comp.}, 26:869--879, 1972.

\bibitem{BrambleKing1994}
J.~H. Bramble and J.~T. King.
\newblock A robust finite element method for nonhomogeneous {D}irichlet
  problems in domains with curved boundaries.
\newblock {\em Math. Comp.}, 63(207):1--17, 1994.

\bibitem{BrambleKing1997}
J.~H. Bramble and J.~T. King.
\newblock A finite element method for interface problems in domains with smooth
  boundaries and interfaces.
\newblock {\em Adv. Comput. Math.}, 6(2):109--138 (1997), 1996.

\bibitem{BrennerScott}
S.~C. Brenner and L.~R. Scott.
\newblock {\em The mathematical theory of finite element methods}, volume~15 of
  {\em Texts in Applied Mathematics}.
\newblock Springer, New York, third edition, 2008.

\bibitem{BrezziFortin}
F.~Brezzi and M.~Fortin.
\newblock {\em Mixed and hybrid finite element methods}, volume~15 of {\em
  Springer Series in Computational Mathematics}.
\newblock Springer-Verlag, New York, 1991.

\bibitem{BES2014}
M.~Burger, O.~Elvetun, and M.~Schlottbom.
\newblock Analysis of the diffuse domain method for second order elliptic
  boundary value problems.
\newblock {\em Foundations of Computational Mathematics}, pages 1--48, 2015.

\bibitem{BES2015}
M.~Burger, O.~L. Elvetun, and M.~Schlottbom.
\newblock Diffuse interface methods for inverse problems: case study for an
  elliptic cauchy problem.
\newblock {\em Inverse Problems}, 31(12):125002, 2015.

\bibitem{ChenZou1998}
Z.~Chen and J.~Zou.
\newblock Finite element methods and their convergence for elliptic and
  parabolic interface problems.
\newblock {\em Numer. Math.}, 79(2):175--202, 1998.

\bibitem{Chernyshenko2013}
A.~Y. Chernyshenko and M.~A. Olshanskii.
\newblock Non-degenerate {E}ulerian finite element method for solving {PDE}s on
  surfaces.
\newblock {\em Russian J. Numer. Anal. Math. Modelling}, 28(2):101--124, 2013.

\bibitem{Ciarlet2002}
P.~G. Ciarlet.
\newblock {\em The finite element method for elliptic problems}, volume~40 of
  {\em Classics in Applied Mathematics}.
\newblock Society for Industrial and Applied Mathematics (SIAM), Philadelphia,
  PA, 2002.
\newblock Reprint of the 1978 original [North-Holland, Amsterdam; MR0520174 (58
  \#25001)].

\bibitem{DelfourZolesio2011}
M.~C. Delfour and J.-P. Zol{\'e}sio.
\newblock {\em Shapes and geometries}, volume~22 of {\em Advances in Design and
  Control}.
\newblock Society for Industrial and Applied Mathematics (SIAM), Philadelphia,
  PA, second edition, 2011.
\newblock Metrics, analysis, differential calculus, and optimization.

\bibitem{ElliottStinner2009}
C.~M. Elliott and B.~Stinner.
\newblock Analysis of a diffuse interface approach to an advection diffusion
  equation on a moving surface.
\newblock {\em Math. Models Methods Appl. Sci.}, 19(5):787--802, 2009.

\bibitem{ElliotStinnerStylesWelford2011}
C.~M. Elliott, B.~Stinner, V.~Styles, and R.~Welford.
\newblock Numerical computation of advection and diffusion on evolving diffuse
  interfaces.
\newblock {\em IMA J. Numer. Anal.}, 31(3):786--812, 2011.

\bibitem{FranzGaertnerRoosVoigt2012}
S.~Franz, R.~G{\"a}rtner, H.-G. Roos, and A.~Voigt.
\newblock A note on the convergence analysis of a diffuse-domain approach.
\newblock {\em Comput. Methods Appl. Math.}, 12(2):153--167, 2012.

\bibitem{GT2001}
D.~Gilbarg and N.~S. Trudinger.
\newblock {\em Elliptic partial differential equations of second order}.
\newblock Classics in Mathematics. Springer-Verlag, Berlin, 2001.
\newblock Reprint of the 1998 edition.

\bibitem{GiraultGlowinski1995}
V.~Girault and R.~Glowinski.
\newblock Error analysis of a fictitious domain method applied to a {D}irichlet
  problem.
\newblock {\em Japan J. Indust. Appl. Math.}, 12(3):487--514, 1995.

\bibitem{GlowinskiPanPeriaux1994}
R.~Glowinski, T.-W. Pan, and J.~P{\'e}riaux.
\newblock A fictitious domain method for {D}irichlet problem and applications.
\newblock {\em Comput. Methods Appl. Mech. Engrg.}, 111(3-4):283--303, 1994.

\bibitem{Grisvard85}
P.~Grisvard.
\newblock {\em Elliptic Problems in Nonsmooth Domains}.
\newblock Pitman, Boston, 1985.

\bibitem{HackbuschSauter1997}
W.~Hackbusch and S.~A. Sauter.
\newblock Composite finite elements for the approximation of {PDE}s on domains
  with complicated micro-structures.
\newblock {\em Numer. Math.}, 75(4):447--472, 1997.

\bibitem{HansboHansbo2002}
A.~Hansbo and P.~Hansbo.
\newblock An unfitted finite element method, based on {N}itsche's method, for
  elliptic interface problems.
\newblock {\em Comput. Methods Appl. Mech. Engrg.}, 191(47-48):5537--5552,
  2002.

\bibitem{LervagLowengrub2014}
K.~Y. Lervag and J.~Lowengrub.
\newblock Analysis of the diffuse-domain method for solving {PDE}s in complex
  geometries.
\newblock {\em Communications in Mathematical Sciences}, 13(6):1473--1500,
  2015.

\bibitem{LeVequeLin1994}
R.~J. LeVeque and Z.~L. Li.
\newblock The immersed interface method for elliptic equations with
  discontinuous coefficients and singular sources.
\newblock {\em SIAM J. Numer. Anal.}, 31(4):1019--1044, 1994.

\bibitem{LiMelenkWohlmuthZou2010}
J.~Li, J.~M. Melenk, B.~Wohlmuth, and J.~Zou.
\newblock Optimal a priori estimates for higher order finite elements for
  elliptic interface problems.
\newblock {\em Appl. Numer. Math.}, 60(1-2):19--37, 2010.

\bibitem{LiLowengrubRaetzVoigt2009}
X.~Li, J.~Lowengrub, A.~R{\"a}tz, and A.~Voigt.
\newblock Solving {PDE}s in complex geometries: a diffuse domain approach.
\newblock {\em Commun. Math. Sci.}, 7(1):81--107, 2009.

\bibitem{Li2003}
Z.~Li.
\newblock An overview of the immersed interface method and its applications.
\newblock {\em Taiwanese J. Math.}, 7(1):1--49, 2003.

\bibitem{LiLinWu2003}
Z.~Li, T.~Lin, and X.~Wu.
\newblock New {C}artesian grid methods for interface problems using the finite
  element formulation.
\newblock {\em Numer. Math.}, 96(1):61--98, 2003.

\bibitem{LiehrPreusserRumpfSauterSchwen2009}
F.~Liehr, T.~Preusser, M.~Rumpf, S.~Sauter, and L.~O. Schwen.
\newblock Composite finite elements for 3{D} image based computing.
\newblock {\em Comput. Vis. Sci.}, 12(4):171--188, 2009.

\bibitem{LinLinZhang2015}
T.~Lin, Y.~Lin, and X.~Zhang.
\newblock Partially {P}enalized {I}mmersed {F}inite {E}lement {M}ethods {F}or
  {E}lliptic {I}nterface {P}roblems.
\newblock {\em SIAM J. Numer. Anal.}, 53(2):1121--1144, 2015.

\bibitem{MaitreTomas1999}
J.-F. Maitre and L.~Tomas.
\newblock A fictitious domain method for {D}irichlet problems using mixed
  finite elements.
\newblock {\em Appl. Math. Lett.}, 12(4):117--120, 1999.

\bibitem{ParvizianDuesterRank2007}
J.~Parvizian, A.~D{\"u}ster, and E.~Rank.
\newblock Finite cell method: {$h$}- and {$p$}-extension for embedded domain
  problems in solid mechanics.
\newblock {\em Comput. Mech.}, 41(1):121--133, 2007.

\bibitem{Peskin1977}
C.~S. Peskin.
\newblock Numerical analysis of blood flow in the heart.
\newblock {\em J. Computational Phys.}, 25(3):220--252, 1977.

\bibitem{ReuterHillHarrison2012}
M.~G. Reuter, J.~C. Hill, and R.~J. Harrison.
\newblock Solving {PDE}s in irregular geometries with multiresolution methods
  {I}: {E}mbedded {D}irichlet boundary conditions.
\newblock {\em Comput. Phys. Commun.}, 183(1):1--7, 2012.

\bibitem{Thomee1973}
V.~Thom{\'e}e.
\newblock Polygonal domain approximation in {D}irichlet's problem.
\newblock {\em J. Inst. Math. Appl.}, 11:33--44, 1973.

\bibitem{WuLiLai2011}
C.-T. Wu, Z.~Li, and M.-C. Lai.
\newblock Adaptive mesh refinement for elliptic interface problems using the
  non-conforming immersed finite element method.
\newblock {\em Int. J. Numer. Anal. Model.}, 8(3):466--483, 2011.

\end{thebibliography}

\begin{appendix}
\section{Basic Properties of Diffuse Approximations}
We let $D=D_1$ and $\Gamma=\partial D$ in the following. The corresponding estimates for $D_2$ are derived in a similar fashion.
Let $\eps_0>0$, $0<\eps\leq \eps_0$ and 
$$
 \De = \{x\in D: {\rm dist}(x,\Gamma)> \eps\},\quad \Gamma_\eps=D\setminus D^{\eps}.
$$
For $\eps_0$ sufficiently small, the projection $p_{\Gamma}: \Gamma_{\eps_0}\to \Gamma$ is well-defined, and given by \cite[Chapter 7, Theorem 3.1]{DelfourZolesio2011}
\begin{align}\label{eq:projection}
 p_{\Gamma}(x) = x- d_\Gamma(x) \nabla d_\Gamma(x).
\end{align}
Notice that $\nabla d_\Gamma(x)= n(p_{\partial D}(x))$ for all $x\in \Gamma_{\eps_0}$ \cite[Chapter 7, Theorem~8.5]{DelfourZolesio2011}, and we will therefore write $n(x)=\nabla d_\Gamma(x)$.
Here, $n$ denotes the exterior unit normal field to $\partial D$.
For $t\in [0,\eps_0]$, we define the mapping $\Phi_t(x)=x-tn(x)$, $x\in \Gamma$, and note that $\Phi_t(\Gamma)=\partial D^t$. Moreover, $D\Phi_t(x)=I-tD^2 d_\Gamma(x)$, and, cf. \cite[Eq. (9)]{BES2014},
\begin{align}\label{eq:det_to_one}
 \sup_{x\in\Gamma} |\det D \Phi_t(x)-1+t \Delta d_D(x)| \leq C t^2.
\end{align}
Hence, after decreasing $\eps_0$ if necessary, we may assume that
\begin{align}\label{eq:det_half}
 \frac{1}{2} \leq \det D\Phi_t \leq 2,\quad 0\leq t\leq \eps_0.
\end{align}
For any integrable $v$ the transformation formula implies
\begin{align}\label{eq:fubini}
   \int_{\Gamma_{\eps}} v(x)\d x = \int_{\Gamma}\int_{0}^\eps v(x-tn(x)) |\det D\Phi_t(x)| \d t\d\sigma(x).
\end{align}
For the right-hand side we will employ the fundamental theorem of calculus
\begin{align}\label{eq:fundamental}
 v(x- t n(x)) = v(x) - \int_0^t \partial_n v(x-s n(x)) \d s,\quad 0\leq t\leq \eps_0.
\end{align}
The following is a central estimate; cf. \cite[Theorem~A.2]{BES2015}. 
\begin{theorem}\label{thm:estimate} 
 There exists a constant $C>0$ not depending on $\eps$ such that
 \begin{align*}
  \|v\|_{L^2(\Gamma_{\eps})} \leq C(\sqrt{\eps}\|v\|_{L^2(\Gamma)} + \eps\|\partial_n v\|_{L^2(\Gamma_{\eps})}) \quad\text{ for $v\in H^1(\Gamma_{\eps})$.}
 \end{align*}
\end{theorem}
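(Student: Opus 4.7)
The plan is to reduce the tubular integral on $\Gamma_\eps$ to an integral over $\Gamma$ times the normal direction via the transformation formula \eqref{eq:fubini}, and then to recover $v$ at interior points from its trace on $\Gamma$ using the fundamental theorem of calculus \eqref{eq:fundamental}.

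First I would apply \eqref{eq:fubini} to $|v|^2$, which together with the upper bound $|\det D\Phi_t|\leq 2$ from \eqref{eq:det_half} gives
\begin{align*}
 \|v\|_{L^2(\Gamma_\eps)}^2 \leq 2\int_\Gamma \int_0^\eps |v(x-tn(x))|^2 \d t\, \d\sigma(x).
\end{align*}
Next, for each fixed $x\in\Gamma$ and $t\in[0,\eps]$, I would apply \eqref{eq:fundamental} and the elementary inequality $(a+b)^2\leq 2a^2+2b^2$ to obtain
\begin{align*}
 |v(x-tn(x))|^2 \leq 2|v(x)|^2 + 2\Bigl|\int_0^t \partial_n v(x-sn(x))\,\d s\Bigr|^2.
\end{align*}
The inner term is then handled by the Cauchy--Schwarz inequality, which yields
\begin{align*}
 \Bigl|\int_0^t \partial_n v(x-sn(x))\,\d s\Bigr|^2 \leq t\int_0^\eps |\partial_n v(x-sn(x))|^2\,\d s \leq \eps \int_0^\eps |\partial_n v(x-sn(x))|^2\,\d s.
\end{align*}

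Integrating this estimate in $t$ over $[0,\eps]$ and then in $x$ over $\Gamma$, the first term contributes $2\eps\|v\|_{L^2(\Gamma)}^2$ (after multiplying by the factor of $2$ from the Jacobian). For the second term I would use \eqref{eq:fubini} once more in the reverse direction together with the lower bound $|\det D\Phi_t|\geq 1/2$ from \eqref{eq:det_half} to conclude
\begin{align*}
 \int_\Gamma \int_0^\eps |\partial_n v(x-sn(x))|^2\,\d s\,\d\sigma(x) \leq 2\|\partial_n v\|_{L^2(\Gamma_\eps)}^2,
\end{align*}
so the second contribution is bounded by $C\eps^2 \|\partial_n v\|_{L^2(\Gamma_\eps)}^2$. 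Combining the two contributions and taking square roots yields the claim with $C$ independent of $\eps$.

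The calculation is essentially routine; the only point that requires some care is the bookkeeping of the Jacobian factors, which is precisely what \eqref{eq:det_half} is designed to handle uniformly in $\eps\in(0,\eps_0]$. In particular, the smallness condition on $\eps_0$ is what guarantees that $\Phi_t$ is a bi-Lipschitz parametrization of $\partial D^t$ with comparable measure to $\Gamma$, so that the transfer of $L^2$-information between $\Gamma$ and parallel level sets $\partial D^t$ occurs with constants independent of $t\in[0,\eps]$.
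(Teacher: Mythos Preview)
Your proof is correct and follows essentially the same route as the paper: apply the tubular transformation formula \eqref{eq:fubini} with the Jacobian bounds \eqref{eq:det_half}, use the fundamental theorem of calculus \eqref{eq:fundamental} together with Cauchy--Schwarz (the paper says H\"older, which is the same here) to control the pointwise values, and then convert the normal-derivative integral back to $\Gamma_\eps$ via \eqref{eq:fubini} and the lower Jacobian bound. The only difference is the order in which you invoke \eqref{eq:fubini} and the pointwise estimate, which is immaterial.
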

\begin{proof}
  Let $0\leq t\leq \eps$. Using \eqref{eq:fundamental} and H\"older's inequality we obtain
  \begin{align*}
  |v(x- tn(x))|^2 \leq 2 ( |v(x)|^2 + \eps\int_{0}^{\eps} |\partial_n v(x-sn(x))|^2 \d s).
  \end{align*}
 Using the latter in \eqref{eq:fubini} and using \eqref{eq:det_half}, we obtain
\begin{align*}
  \|v\|_{L^2(\Gamma_{\eps})}^2 \leq C(\eps\|v\|_{L^2(\Gamma)}^2+ \eps^2 \int_{\Gamma} \int_{0}^{\eps} |\partial_n v(x-sn(x))|^2\d s\d\sigma).
\end{align*}
Using \eqref{eq:fubini} and \eqref{eq:det_half} we further may write
\begin{align*}
  \int_{\Gamma}\int_{0}^{\eps} |\partial_n v(x-sn(x))|^2\d s\d\sigma\leq C\int_{\Gamma_{\eps}} |\partial_n v(x)|^2\d x,
\end{align*}
which completes the proof.
\end{proof}

\begin{lemma}\label{lem:error_perturbed_interface}
 There exists a constant $C>0$ independent of $\eps$ such that for $v\in H^1(\Gamma_{\eps})$
 \begin{align*}
 \| v\|_{L^2(\partial D^\eps)} \leq C (\|v\|_{L^2(\Gamma)} + \sqrt{\eps}\|\partial_n v\|_{L^2(\Gamma_{\eps})} ).
\end{align*}
\end{lemma}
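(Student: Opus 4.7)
The plan is to parametrize the shifted surface $\partial D^\eps$ by $\Gamma$ using the normal flow $\Phi_\eps(x)=x-\eps n(x)$, and to transfer the trace on $\partial D^\eps$ to a trace on $\Gamma$ plus a radial correction controlled by $\partial_n v$. The strategy is essentially the one used in Theorem~\ref{thm:estimate}, except that the outer integral is taken over $\Gamma$ with a surface Jacobian instead of over $\Gamma_\eps$ with a volume Jacobian.

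First, I would write, by the surface change of variables for $\Phi_\eps\colon\Gamma\to\partial D^\eps$,
\begin{align*}
 \|v\|_{L^2(\partial D^\eps)}^2 = \int_\Gamma |v(\Phi_\eps(x))|^2\, J^s_\eps(x)\,\d\sigma(x),
\end{align*}
where $J^s_\eps$ denotes the tangential Jacobian. Since $\nabla d_\Gamma=n\circ p_\Gamma$ is $C^{0,1}$ on $\Gamma_{\eps_0}$, and the tangential differential of $\Phi_\eps$ at $x\in\Gamma$ equals $(I-\eps\,D^2 d_\Gamma(x))|_{T_x\Gamma}$, up to decreasing $\eps_0$ once more we may assume $J^s_\eps(x)\leq C$ uniformly in $x\in\Gamma$ and $\eps\in[0,\eps_0]$, completely analogously to~\eqref{eq:det_half}.

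Next I would apply the fundamental theorem of calculus~\eqref{eq:fundamental} at $t=\eps$ and the Cauchy--Schwarz inequality to get
\begin{align*}
 |v(\Phi_\eps(x))|^2\leq 2|v(x)|^2 + 2\eps\int_0^\eps |\partial_n v(x-sn(x))|^2\,\d s.
\end{align*}
Inserting this into the previous display and using the uniform bound on $J^s_\eps$ yields
\begin{align*}
 \|v\|_{L^2(\partial D^\eps)}^2 \leq C\|v\|_{L^2(\Gamma)}^2 + C\eps\int_\Gamma\int_0^\eps|\partial_n v(x-sn(x))|^2\,\d s\,\d\sigma(x).
\end{align*}
The double integral is then converted back to a volume integral using the coarea/transformation formula~\eqref{eq:fubini} together with~\eqref{eq:det_half}, giving a bound by $C\eps\|\partial_n v\|_{L^2(\Gamma_\eps)}^2$. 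Taking square roots yields the claim.

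The only non-routine step is the uniform boundedness of the surface Jacobian $J^s_\eps$, but this follows from the $C^{1,1}$-regularity of $\Gamma$ (equivalently the Lipschitz regularity of $\nabla d_\Gamma$ on the tubular neighborhood) exactly as in the derivation of~\eqref{eq:det_to_one} and~\eqref{eq:det_half}; after shrinking $\eps_0$ if necessary the tangential map $I-\eps D^2 d_\Gamma$ on $T_x\Gamma$ remains a uniformly bounded perturbation of the identity. All remaining estimates are the same Cauchy--Schwarz/Fubini manipulations already used in Theorem~\ref{thm:estimate}.
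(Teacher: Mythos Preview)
Your proof is correct and follows essentially the same route as the paper: transform the surface integral over $\partial D^\eps$ to one over $\Gamma$ via $\Phi_\eps$, apply \eqref{eq:fundamental} with Cauchy--Schwarz, and convert the remaining double integral back to $\|\partial_n v\|_{L^2(\Gamma_\eps)}^2$ using \eqref{eq:fubini} and \eqref{eq:det_half}. The only cosmetic difference is that the paper writes the surface Jacobian as $|\det D\Phi_\eps(x)|$ (which is legitimate here since $D^2 d_\Gamma\, n=0$, so the normal direction is fixed by $D\Phi_\eps$ and the full determinant coincides with the tangential one), whereas you explicitly introduce $J^s_\eps$.
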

\begin{proof}
 The transformation formula yields
 \begin{align*}
  \int_{\partial D^\eps} |v|^2\d\sigma &= \int_{\Gamma} |v(x-\eps n(x))|^2 |\det D\Phi_{\eps}(x)|\d\sigma(x).
 \end{align*}
 Using \eqref{eq:fundamental}, the assertion follows with similar arguments as in the proof of Theorem~\ref{thm:estimate}.
\end{proof}

\begin{lemma}\label{lem:trace_unweighted}
 There is a constant $C>0$ independent of $\eps$ such that for $v\in H^1(D^\eps)$
 \begin{align*}
  \|v\|_{L^2(\partial D^{\eps})} \leq C \|v\|_{H^1(D^\eps)}.
\end{align*}
\end{lemma}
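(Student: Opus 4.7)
The goal is a trace inequality on $\partial D^\eps$ whose constant does not deteriorate as $\eps\to 0$. The plan is to reduce to an estimate on $\Gamma$ (via the parametrization $\Phi_\eps$) and then control $v\circ\Phi_\eps$ using the fundamental theorem of calculus along normal rays, averaged over a reference strip whose thickness can be chosen independently of $\eps$.

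First, I parametrize $\partial D^\eps$ by $\Phi_\eps\colon\Gamma\to\partial D^\eps$. The change of variables formula together with the Jacobian bound \eqref{eq:det_half} yields
\begin{align*}
 \|v\|_{L^2(\partial D^\eps)}^2 \leq 2\int_\Gamma |v(\Phi_\eps(x))|^2\,\d\sigma(x).
\end{align*}
This reduces the problem to bounding the right-hand side in terms of $\|v\|_{H^1(D^\eps)}$.

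Next, I fix a strip thickness $r$ independent of $\eps$. Without loss of generality assume $\eps\leq \eps_0/2$ (the complementary range $\eps\in(\eps_0/2,\eps_0]$ is a compact set of parameters on which the standard trace theorem gives a uniform constant for $D^\eps$). Set $r=\eps_0/2$, so that for all $x\in\Gamma$ and $t\in(0,r]$ the point $\Phi_{\eps+t}(x)$ lies in the tubular neighborhood where $\Phi_s$ is well-defined, and moreover is at distance $\eps+t>\eps$ from $\Gamma$, i.e.\ in $D^\eps$. The fundamental theorem of calculus \eqref{eq:fundamental} along the normal line gives
\begin{align*}
 v(\Phi_\eps(x)) = v(\Phi_{\eps+t}(x)) + \int_\eps^{\eps+t} \partial_n v(\Phi_s(x))\,\d s.
\end{align*}
Squaring, applying Cauchy–Schwarz to the integral term, and averaging over $t\in(0,r)$ produces
\begin{align*}
 |v(\Phi_\eps(x))|^2 \leq \frac{C}{r}\int_\eps^{\eps+r}|v(\Phi_s(x))|^2\,\d s + C r\int_\eps^{\eps+r}|\partial_n v(\Phi_s(x))|^2\,\d s.
\end{align*}

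Finally I integrate over $\Gamma$ and apply Fubini in the form of \eqref{eq:fubini} (with the interval $(0,\eps)$ replaced by $(\eps,\eps+r)$) combined with \eqref{eq:det_half}. The resulting integrals on the right-hand side become integrals over the strip $\{y\in D:\eps<d_\Gamma(y)<\eps+r\}\subset D^\eps$. This gives
\begin{align*}
 \int_\Gamma |v(\Phi_\eps(x))|^2\,\d\sigma(x) \leq \frac{C}{r}\|v\|_{L^2(D^\eps)}^2 + C r\|\nabla v\|_{L^2(D^\eps)}^2 \leq C\|v\|_{H^1(D^\eps)}^2,
\end{align*}
and combining with the first step completes the proof. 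The main point, and the only real obstacle, is the second step: choosing a reference strip of thickness independent of $\eps$ that still lies inside $D^\eps$ and inside the tubular neighborhood where $\Phi_s$ is well-defined. With the choice $r=\eps_0/2$ (and the mild assumption $\eps\leq\eps_0/2$), both requirements are met and the constants in the resulting estimate do not depend on $\eps$.
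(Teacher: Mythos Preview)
Your proof is correct and follows a genuinely different route from the paper's. The paper first extends $v\in H^1(D^\eps)$ to $H^1(\RR^n)$ by reflection across $\partial D^\eps$ (claiming a bound uniform in $\eps$), then applies the fixed trace inequality on $\Gamma=\partial D$, and finally invokes Lemma~\ref{lem:error_perturbed_interface} to transfer from $\Gamma$ to $\partial D^\eps$. Your argument avoids any extension: you integrate \emph{inward} from $\partial D^\eps$ along normal rays over a strip of fixed thickness $r=\eps_0/2$ contained in $D^\eps$, and average so that the resulting bulk integrals are directly controlled by $\|v\|_{H^1(D^\eps)}$. This is more elementary and self-contained, and it sidesteps the need to verify that the reflection extension constant is uniform in $\eps$ (a point the paper asserts without detail). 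The price you pay is the split into $\eps\leq\eps_0/2$ and $\eps\in(\eps_0/2,\eps_0]$; the compactness argument you give for the latter range is correct in spirit but a touch informal---a cleaner fix would be to note that one may shrink $\eps_0$ by a factor of two at the outset so that the tubular neighborhood is valid up to distance $2\eps_0$, allowing $r=\eps_0$ to work for all $\eps\leq\eps_0$ simultaneously.
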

\begin{proof}
We extend $v\in H^1(D^\eps)$ to a function $v\in H^1(\RR^n)$ by reflection \cite[Theorem~4.26]{Adams1975}. We have $\|v\|_{H^1(\RR^n)}\leq C \|v\|_{H^1(D^\eps)}$.
Continuity of the embedding $H^1(D)\hookrightarrow L^2(\partial D)$ \cite{Adams1975} implies $\|v\|_{L^2(\partial D)}\leq C \|v\|_{H^1(D)}$. Lemma~\ref{lem:error_perturbed_interface} and $\|v\|_{H^1(\Gamma_\eps)}\leq \|v\|_{H^1(D)}$ complete the proof.
\end{proof}
The preceding lemma with a different proof may also be found in \cite[Lemma~2.1]{Arrieta08}.
\end{appendix}
\end{document}